\journal{Journal of Algebra (Computational Alg. section)}
\theoremstyle{plain}
\newtheorem{theo}{Theorem}[section]
\newtheorem{prop}[theo]{Proposition}
\newtheorem{lemma}[theo]{Lemma}
\newtheorem{cor}[theo]{Corollary}
\theoremstyle{definition}
\newtheorem{defi}[theo]{Definition}
\theoremstyle{remark}
\newtheorem{remark}[theo]{Remark}
\newtheorem{ex}[theo]{Example}
\DeclareMathOperator{\ann}{Ann}
\DeclareMathOperator{\lcm}{lcm}
\DeclareMathOperator{\ini}{in}
\DeclareMathOperator{\gr}{gr}
\DeclareMathOperator{\ord}{ord}
\def\K{\mathbb{K}}
\def\C{\mathbb{C}}
\def\R{\mathbb{R}}
\def\Z{\mathbb{Z}}
\def\N{\mathbb{N}}
\def\d{\partial}
\newcommand{\BS}{Bernstein-Sato }
\begin{document}

\begin{frontmatter}



\title{{\bf Algorithms for Checking Rational Roots of $b$-functions and their Applications}}


%
%

\author[viktor]{V.~Levandovskyy}
\address[viktor]{Lehrstuhl D f\"ur Mathematik, RWTH Aachen University, Templergraben 64, D-52062 Aachen, Germany}
\ead{viktor.levandovskyy@rwth-aachen.de}
\ead[url]{http://www.math.rwth-aachen.de/\~\,\! Viktor.Levandovskyy}

\author[jorge]{J.~Mart\'in-Morales\corref{cor1}\fnref{fn2}}
\cortext[cor1]{Corresponding author. Tel.: +34 976 76 13 24, Ext. 3275; Fax: +34 976 76 13 38.}
\fntext[fn2]{Partially supported by MTM2007-67908-C02-01, FQM-333 and ``E15 Grupo Consolidado Geometr\'ia (DGA)''}
\address[jorge]{Department of Mathematics-I.U.M.A., University of Zaragoza, C/Pedro Cerbuna, 12 - 50009, Zaragoza, Spain}
\ead{jorge@unizar.es}
\ead[url]{http://www.grupo.us.es/gmcedm}

\begin{abstract}
Bernstein-Sato polynomial of a hypersurface is an important object with numerous
applications. It is known, that it is complicated to obtain it computationally, as a number
of open questions and challenges indicate. In this paper we propose a family
of algorithms called \texttt{checkRoot} for optimized check of whether a given
rational number is a root of Bernstein-Sato polynomial and the computations of
its multiplicity.
This algorithms are used in the new approach 
to compute the whole global or local Bernstein-Sato polynomial
and $b$-function of a holonomic ideal with respect to weights. 
They are applied in numerous situations, where there is a possibility to compute an upper bound for the polynomial. Namely, it can be achieved by means of embedded resolution, for 
topologically equivalent singularities or using the formula of A'Campo and spectral numbers.
We also present approaches to the logarithmic comparison problem and 
the intersection homology D-module.
Several applications are presented as well as solutions to some challenges which were intractable with the classical methods.
One of the main applications consists of computing of a stratification of affine space
with the local $b$-function being constant on each stratum. Notably, the algorithm we propose 
does not employ primary decomposition. Also we apply our results for the computation of Bernstein-Sato polynomials for varieties.
The methods from this paper have been implemented in {\sc Singular:Plural} as libraries {\tt dmod.lib} and {\tt bfun.lib}. All the examples from the paper have been computed with this implementation.
\end{abstract}

\begin{keyword}
Singularities \sep $D$-modules \sep Bernstein-Sato polynomials \sep non-commutative Gr\"obner bases.

\MSC 16S32 \sep 68W30 \sep 32S40.
\end{keyword}

\end{frontmatter}




\section{Introduction}
\label{intro}

Through the article we assume $\K$ to be a field of characteristic 0.
By $R_n$ we denote the ring of polynomials $\K[x_1,\ldots,x_n]$ in $n$ variables over $\K$ and
by $D_n$ we denote the ring of $\K$-linear partial differential operators with coefficients in $R_n$, that is the
\textit{$n$-th Weyl algebra} \cite{MR01}. The ring $D_n$ is the associative $\K$-algebra
generated by the partial differential operators $\partial_i$ and the 
multiplication operators $x_i$ subject to relations
\[
\{ \partial_i x_j = x_j\partial_i + \delta_{i j}, x_j x_i = x_i x_j, \d_j \d_i = \d_i \d_j \mid 1\leq i,j \leq n \}. 
\]
That is, the only non-commuting pairs of variables are $(x_i,\d_i)$; they satisfy the
relation $\d_i x_i = x_i \d_i + 1$. We use the Lie bracket notation $[a,b]:=a b - b a$ for 
operators $a,b$, then e.g. the latter relation can be written as $[\d_i, x_i] = 1$.

Finally, we denote by $D_n[s]$ the ring of polynomials in one variable $s$ with coefficients in the
$n$-th Weyl algebra, i.e. $D_n[s] = D_n\otimes_\K \K[s]$.

Let us recall Bernstein's construction. Given a non-zero polynomial $f\in R_n$
in $n$ variables, we consider
$M = R_n[s,\frac{1}{f}]\cdot f^s$ which is by definition the free $R_n[s,\frac{1}{f}]$-module
of rank one generated by the formal symbol $f^s$. Then $M$
has a natural structure of left $D_n[s]$-module. Here the differential operators
act in a natural way,
\begin{equation}\label{action}
\partial_i(g(s,x)\cdot f^s) = \left(\frac{\partial g}{\partial x_i} + s g(s,x)
\frac{\partial f}{\partial x_i} \frac{1}{f} \right) \cdot f^s\ \in M
\end{equation}

\begin{theo}[Bernstein \cite{Bernstein72}]
For every polynomial $f\in R_n$ there exists a non-constant polynomial $b(s)\in \K[s]$
and a differential operator $P(s)\in D_n[s]$ such that
\begin{equation}\label{bernstein}
  P(s)f\cdot f^s = b(s)\cdot f^s\ \in \ R_n[s,\frac{1}{f}]\cdot f^s = M.
\end{equation}
\end{theo}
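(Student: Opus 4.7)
The plan is to extend scalars from $\K[s]$ to the rational function field $\K(s)$, show that the resulting module is holonomic (hence of finite length), and then extract the required functional equation from the stabilization of a descending chain of submodules. Concretely, form the Weyl algebra $D_n(s) := D_n \otimes_\K \K(s)$ and view $M' := M \otimes_{\K[s]} \K(s)$ as a left $D_n(s)$-module via the action (\ref{action}). The passage to $\K(s)$ is needed because the desired polynomial $b(s)$ will arise as a common denominator when one clears fractions at the end.

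The crucial step is to prove that $M'$ is \emph{holonomic} as a $D_n(s)$-module, i.e.\ that its Gelfand-Kirillov dimension with respect to the Bernstein filtration equals $n$. With $d:=\deg f$, I introduce the $\K(s)$-subspaces
\[
F_k M' := \Big\{ \tfrac{g}{f^k}\cdot f^s \;:\; g \in R_n,\ \deg g \leq (d+1)k \Big\},
\]
so that $\dim_{\K(s)} F_k M' \leq \binom{(d+1)k+n}{n}$, a polynomial of degree $n$ in $k$. Using (\ref{action}) one verifies that applying an operator of Bernstein-degree $j$ to an element of $F_k M'$ lands in $F_{k+j} M'$; in other words $\{F_k M'\}$ is a good filtration on $M'$. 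Combined with Bernstein's inequality (every nonzero finitely generated $D_n(s)$-module has dimension $\geq n$), this forces the dimension to be exactly $n$, so $M'$ is holonomic.

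Since every holonomic module has finite length, the descending chain of $D_n(s)$-submodules
\[
D_n(s)\cdot f^s \;\supseteq\; D_n(s)\cdot f\cdot f^s \;\supseteq\; D_n(s)\cdot f^2\cdot f^s \;\supseteq\; \cdots
\]
must stabilize at some step $k \geq 1$, giving $f^{k-1}\cdot f^s = Q(s)\cdot f^k\cdot f^s$ in $M'$ for some $Q(s)\in D_n(s)$. Clearing polynomial denominators in $s$ produces $b(s)\in\K[s]\setminus\{0\}$ and $\tilde P(s)\in D_n[s]$ with $b(s)\,f^{k-1}\cdot f^s = \tilde P(s)\,f\cdot f^{s+k-1}$, and the affine substitution $s\mapsto s-k+1$ (which leaves both $\K[s]$ and $D_n[s]$ invariant) yields an identity of the exact shape (\ref{bernstein}).

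The hard part is the holonomicity estimate: one has to check by an explicit computation with (\ref{action}) that the filtration above is genuinely stable under the Bernstein filtration $B_\bullet D_n(s)$, which amounts to tracking how many extra factors of $f$ enter the denominator each time some $\d_i$ hits $g/f^k\cdot f^s$ and balancing this against the degree growth of the numerator. Everything afterwards is formal bookkeeping: finite length is automatic for holonomic modules, the passage from $D_n(s)$ back to $D_n[s]$ by clearing denominators in $s$ is standard, and the translation $s\mapsto s-k+1$ is harmless.
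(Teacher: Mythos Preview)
The paper does not prove this theorem at all: it is stated as background and attributed to Bernstein's original 1972 paper, with no argument given. So there is nothing in the paper to compare your proposal against.

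That said, your argument is precisely Bernstein's original proof, and it is correct. The filtration $F_k M'$ you wrote down is exhaustive and compatible with the Bernstein filtration on $D_n(s)$ (the check that $x_i$ and $\d_i$ each send $F_k$ into $F_{k+1}$ is exactly the computation you sketch), so $M'$ is holonomic over $D_n(s)$; finite length then forces the chain $D_n(s)\cdot f^k\cdot f^s$ to stabilize, and clearing $s$-denominators followed by the shift $s\mapsto s-k+1$ gives the functional equation. One small point you leave implicit: your argument produces a \emph{nonzero} $b(s)$, but the statement asks for a \emph{non-constant} one. This requires $f$ to be a non-unit (otherwise the claim is vacuous or false); under that assumption, a constant $b(s)$ would force $f^s\in D_n[s]\cdot f\cdot f^s$, which one rules out by specializing $s$ to $-1$ and observing that $1/f\notin D_n\cdot 1$ in $R_n[1/f]$. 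You may want to add a sentence to that effect.
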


The monic polynomial $b(s)$ of minimal degree, satisfying (\ref{bernstein}) is called the \textbf{\BS polynomial} or the \textbf{global $b$-function}.



This paper is organized as follows. 
In Section~\ref{checkRootSection}, the \texttt{checkRoot} family of algorithms for checking rational roots of the global and local \BS polynomial is developed. We also show how to compute the $b$-function of a holonomic ideal with respect to a certain weight vector.
In Section~\ref{upperBound}, we show how to 
obtain an upper bound in various situations (by using an embedded resolution, for 
topologically equivalent singularities, by using A'Campo's formula and spectral numbers).
In particular, we demonstrate a complicated example of (non-isolated) quasi-ordinary singularity.

In Section~\ref{minRoot} we discuss the possibilities to obtain integral roots of the $b$-function
and apply it to the computation of the minimal integral root in the context of Intersection Homology $D$-module and Logarithmic Comparison Theorem. In Section~\ref{stratum}
we present a new method for computing the stratification of affine space, according to
local \BS polynomials. 

We want to stress, that \BS polynomials for most of the examples, presented in this paper, cannot be computed by direct methods with any computer algebra system including \textsc{Singular:Plural} \cite{Plural}. Indeed, these examples were known as open challenges in the community and here we present their solutions for the first time.

The timings for examples in this paper were performed on a PC with 4 Dual Core AMD Opteron 64 Processor 8220 (2800 MHz) (only one processor available during the computation) equipped with 32 GB RAM (at most 16 GB available during the computation) running openSUSE 11 Linux. 

\section{The \texttt{checkRoot} Family of Algorithms}
\label{checkRootSection}

For the sake of completeness, some of the ideas coming from \cite{LM08}, as well as some results and their proofs have been included here.

Several algorithms for computing the $b$-function associated with a polynomial are known, see e.g.~\cite{Oaku97a, Oaku97b, Oaku97c}, \cite{SST00}, \cite{BM02}, \cite{Noro02}, \cite{Schulze04}, \cite{LM08}. However, from the computational point of view it is very hard to obtain this polynomial in general. Despite significant recent progress, only restricted number of examples can be actually treated. In order to enhance the computation of the \BS polynomial via Gr\"obner bases,
we study the following computational problems.

\begin{enumerate}
\item Obtain an upper bound for $b_f(s)$, that is, find $B(s)\in \K[s]$ such that $b_f(s)$ divides $B(s)$.
$$
  B(s) = \prod_{i=1}^d (s-\alpha_i)^{m_i}
$$
\item Check whether $\alpha_i$ is a root of the $b$-function.
\item Compute the multiplicity of $\alpha_i$ as a root of $b_f(s)$.
\end{enumerate}

There exist some well-known methods to obtain an upper bound for the Bernstein-Sato polynomial of a hypersurface singularity once we know, for instance, an embedded resolution of such singularity \cite{Kashiwara76/77}. However, as far as we know, there is no algorithm for computing the $b$-function from this upper bound. In this section we present algorithms for checking whether a given rational number is a root of the $b$-function and for computing its multiplicity. As a first application, using this idea, we could obtain $b_f(s)$ for some interesting non-isolated singularities, see Example \ref{exCheckRoot} below.

From the definition of the $b$-function it is clear that
\begin{equation}\label{bernstein2}
  \langle b_f(s) \rangle = (\ann_{D_n[s]}(f^s) + \langle f \rangle)\cap \K[s].
\end{equation}
In fact, this is another way of defining the \BS polynomial. This equation was used to prove the main result of this section, namely Theorem \ref{mainresult}.

\begin{theo}\label{mainresult}
Let $R$ be a $\K$-algebra, whose center contains $\K[s]$. Let $q(s)\in \K[s]$ be a polynomial in one variable and $I$ a left ideal in $R$ satisfying $I\cap \K[s]\neq 0$. The following equalities hold:
\begin{enumerate}
\item $\big( I + R \langle q(s) \rangle \big)\cap \K[s] = I\cap \K[s] + \K[s] \langle q(s) \rangle$,
\item $\big( I : q(s)\big)\cap \K[s] = \big(I\cap \K[s]\big) : q(s)$,
\item $\big( I : q(s)^\infty\big)\cap \K[s] = \big(I\cap \K[s]\big) : q(s)^\infty$.
\end{enumerate}
In particular, using $I = \ann_{D_n[s]}(f^s) + \langle f \rangle \subseteq D_n[s]$ in the above equation (\ref{bernstein2}), we have
\begin{itemize}
\item $\big[\ann_{D_n[s]}(f^s)+ D_n[s] \langle f, q(s)\rangle \big] \cap \K[s]
  = \langle b_f(s),q(s)\rangle =  \big\langle\, \gcd(b_f(s),q(s))\, \big\rangle$,
\item $\big[(\ann_{D_n[s]}(f^s)+ D_n[s] \langle f \rangle) : q(s)\big] \cap \K[s]
  = \langle b_f(s)\rangle : q(s) =  \big\langle\, \frac{b_f(s)}{\gcd(b_f(s),q(s))}\,
 \big\rangle$,
\item $\big[ (\ann_{D_n[s]}(f^s)+ D_n[s] \langle f \rangle) : q(s)^\infty \big] \cap \K[s]
  = \langle b_f(s)\rangle : q(s)^\infty$.
\end{itemize}
\end{theo}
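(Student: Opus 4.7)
The plan is to reduce all three assertions to elementary manipulations inside the PID $\K[s]$, exploiting that $\K[s]\subseteq Z(R)$ allows polynomial factors to be commuted past arbitrary elements of $R$. I would first let $b(s)$ denote a generator of the principal (nonzero, by hypothesis) ideal $I\cap\K[s]\subseteq\K[s]$.

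For part (1), the inclusion $\supseteq$ is immediate from $I\cap\K[s]\subseteq I$ and $\K[s]q(s)\subseteq Rq(s)$. For $\subseteq$, I would take $c(s)\in (I+R\langle q(s)\rangle)\cap\K[s]$ and write $c=a+rq(s)$ with $a\in I$ and $r\in R$; then, setting $d(s)=\gcd(b,q)$ and factoring $b=b_1 d$, $q=q_1 d$, the central element $b_1(s)$ can be pulled across $r$:
\[
b_1(s)\,c(s) \;=\; b_1(s)\,a + r\bigl(b_1(s)q(s)\bigr) \;=\; b_1(s)\,a + (r q_1(s))\,b(s) \;\in\; I.
\]
Since the left-hand side also lies in $\K[s]$, it lies in $I\cap\K[s]=(b_1 d)$; cancelling $b_1$ in the domain $\K[s]$ yields $d\mid c$, whence $c\in(b(s),q(s))=(I\cap\K[s])+\K[s]\langle q(s)\rangle$.

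Part (2) will be essentially tautological: if $c\in\K[s]$ and $c\,q(s)\in I$, then $c\,q(s)\in\K[s]$ as well, forcing $c\,q(s)\in I\cap\K[s]$; the reverse inclusion is immediate. Part (3) follows by applying (2) to each power $q(s)^k$ and taking ascending unions over $k$, since unions commute with intersecting by $\K[s]$.

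For the three specialized formulas at the end of the theorem, I would substitute $I=\ann_{D_n[s]}(f^s)+\langle f\rangle\subseteq D_n[s]$ into (1)--(3). Equation~(\ref{bernstein2}) identifies $I\cap\K[s]=\langle b_f(s)\rangle$, and the standard identities $\langle b_f,q\rangle=\langle\gcd(b_f,q)\rangle$ and $\langle b_f\rangle:q(s)=\langle b_f/\gcd(b_f,q)\rangle$ in the PID $\K[s]$ reduce (1)--(2) to the displayed expressions; the $q(s)^\infty$ statement needs no further simplification. I expect the main obstacle to be the bookkeeping around centrality in step (1): every commutation of a polynomial past an element of $R$ must be justified by $\K[s]\subseteq Z(R)$, and without this hypothesis the clean reduction to a gcd computation fails.
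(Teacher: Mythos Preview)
Your proposal is correct and follows essentially the same approach as the paper: the gcd trick of multiplying through by $b_1(s)=b(s)/\gcd(b(s),q(s))$ in part~(1), exploiting that $\K[s]\subseteq Z(R)$, is exactly the paper's argument, and the paper dispatches parts~(2) and~(3) with the remark that they ``can be shown directly,'' which is precisely the tautological computation you spell out. The specialization to $I=\ann_{D_n[s]}(f^s)+\langle f\rangle$ is likewise handled identically.
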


\begin{proof}
Let $b(s)\neq 0$ be a generator of $I\cap\K[s]$. At first, suppose that $h(s)\in (I+ R\langle q(s)\rangle)\cap \K[s]$. Then we have
\begin{equation}\label{h(s)}
  h(s) = P(s)+Q(s)q(s)
\end{equation}
where $P(s)\in I$ and $Q(s)\in R$. Let $d(s)$ be the greatest common divisor of $b(s)$ and $q(s)$. There exist $b_1(s)$ and $q_1(s)$ such that $d(s)b_1(s)=b(s)$ and $d(s)q_1(s)=q(s)$, and hence $b_1(s)q(s) = q_1(s)b(s)$. Since $s$ commutes with all elements in $R$,
multiplying the equation (\ref{h(s)}) by $b_1(s)$, one obtains
\begin{eqnarray*}
  b_1(s)h(s) &=&
  b_1(s)P(s) + Q(s)q_1(s)b(s) \ \in\ I
\end{eqnarray*}
Thus, $b_1(s)h(s)\in I\cap \K[s] = \langle b(s) \rangle$ and therefore $h(s)\in\langle b(s) \rangle : \langle b_1(s) \rangle = \langle d(s) \rangle = I\cap \K[s] + \langle q(s) \rangle$. The other inclusion follows obviously. The second and the third parts can be shown directly and the proof is complete.
\end{proof}

%

Note that the second (resp. third) part of the previous theorem can be used to heuristically find
an upper bound for $b_f(s)$ (resp. the roots of $b_f(s)$).
Since $q(s)$ is in the center of $D_n[s]$, the quotient and saturation ideals 
can be computed effectively via the kernel of a module homomorphism procedures, cf. \cite{LV3}. More classical but less effective approach is to use the extra commutative variable, say $T$, 
and
the formula
$$
  I:q(s)^\infty = D_n[s,T] \langle I, 1-T q(s) \rangle \cap D_n[s].
$$

Let us see an example to illustrate how useful could be Theorem \ref{mainresult}.

\begin{ex}
Let $f\in\C[x,y]$ be the polynomial $x(x^2+y^3)$. The annihilator of $f^s$ in $D[s]$ can be generated by the operators $P_1(s)= 3x y^2\partial_x-y^3\partial_y-3 x^2\partial_y$ and $P_2(s) = 3x\partial_x+2y\partial_y-9s$. Consider the univariate polynomial
$$
  q(s) = (s+1) (s+5/9) (s+8/9) (s+10/9) (s+7/9) (s+11/9) (s+13/9).
$$
Computing a Gr\"obner basis, one can see that the ideal in $D[s,T]$ generated by $\{P_1(s),P_2(s)$, $f, 1-T q(s)\}$ is the whole ring. From Theorem \ref{mainresult} (3), one deduces that $q(s)$ contains all the roots of $b_f(s)$. Using this approach we only have to check whether an ideal is the whole ring or not. Therefore any admissible monomial ordering can be chosen, hence the one, which is generically fast.
\end{ex}

 Given an arbitrary rational number $\alpha$, let us consider the ideal $I_\alpha\subseteq D_n[s]$ generated by the annihilator of $f^s$, the polynomial $f$ and $s+\alpha$. Theorem \ref{mainresult} (1) says that the equality $I_\alpha = D_n[s]$ holds generically (this is clarified in Corollary \ref{cor1} below). Hence the roots of the Bernstein-Sato polynomial are the rational numbers for which the condition $I_\alpha \neq D_n[s]$ is satisfied. This allows one to work out with parameters, that is over $\K(\alpha)\langle x,\partial_x \rangle[s]$, and find the corresponding complete set of special parameters. The latter procedure is algorithmic \cite{LZ07} and implemented in \textsc{Singular}. Note, that the set of candidates to obstructions, returned by the latter algorithm is in general bigger, than the set of real obstructions.




\begin{cor}\label{cor1}
Let $\{ P_1(s),\ldots,P_k(s) \}$ be a system of generators of the annihilator of $f^s$ in $D_n[s]$. The following conditions are equivalent:
\begin{enumerate}
\item $\alpha\in\mathbb{Q}_{>0}$ is a root of $b_f(-s)$.
\item $D_n[s] \langle P_1(s),\ldots,P_k(s),f,s+\alpha \rangle \neq D_n[s]$.
\item $D_n \langle P_1(-\alpha),\ldots,P_k(-\alpha),f \rangle \neq D_n$.
\end{enumerate}
Moreover, in such a case $D_n[s]\langle P_1(s),\ldots,P_k(s),f,s+\alpha \rangle \cap \K[s]
= \K[s] \langle s+\alpha\rangle$.
\end{cor}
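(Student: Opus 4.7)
The plan is to connect all three conditions through the central polynomial $s+\alpha$ and the characterization of $b_f(s)$ via equation (\ref{bernstein2}). Set $I := \ann_{D_n[s]}(f^s) + D_n[s]\langle f\rangle$, so that $I \cap \K[s] = \langle b_f(s)\rangle$, and let $J := I + D_n[s]\langle s+\alpha\rangle$ denote the ideal appearing in condition (2).

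For the equivalence $(1)\Leftrightarrow(2)$, I would apply Theorem \ref{mainresult}(1) directly with $R = D_n[s]$ and $q(s)=s+\alpha$. It yields
\[
J \cap \K[s] = \langle b_f(s)\rangle + \langle s+\alpha\rangle = \bigl\langle \gcd(b_f(s),\,s+\alpha)\bigr\rangle.
\]
Now $J = D_n[s]$ if and only if $1\in J$, equivalently $J\cap\K[s]=\K[s]$, which happens precisely when the gcd above is a nonzero constant. Since $s+\alpha$ is irreducible, the gcd is either $1$ or $s+\alpha$, and it equals $s+\alpha$ exactly when $s+\alpha$ divides $b_f(s)$, i.e.\ when $-\alpha$ is a root of $b_f(s)$, or equivalently $\alpha$ is a root of $b_f(-s)$. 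This proves $(1)\Leftrightarrow(2)$. The ``moreover'' clause is read off from the same computation: when $\alpha$ is a root of $b_f(-s)$, the gcd equals $s+\alpha$, so $J\cap\K[s] = \K[s]\langle s+\alpha\rangle$.

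For the equivalence $(2)\Leftrightarrow(3)$, I would use the fact that $s$ is central in $D_n[s]$, hence $\langle s+\alpha\rangle$ is a two-sided ideal and the evaluation map $\pi\colon D_n[s]\to D_n$, $s\mapsto -\alpha$, is a surjective ring homomorphism with kernel $\langle s+\alpha\rangle$. Since $J$ contains $s+\alpha$, one has $J = D_n[s]$ if and only if $\pi(J) = D_n$. Because $\pi(P_i(s)) = P_i(-\alpha)$ and $\pi(f) = f$, this image is exactly $D_n\langle P_1(-\alpha),\ldots,P_k(-\alpha),f\rangle$, giving the claim.

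There is no real obstacle in this proof; the only subtle point is keeping track of the sign convention between roots of $b_f(s)$ and $b_f(-s)$, and the fact that centrality of $s$ is what makes $\langle s+\alpha\rangle$ a two-sided ideal so that the quotient $D_n[s]/\langle s+\alpha\rangle \cong D_n$ makes sense and can be used to translate condition (2) into condition (3). The heavy lifting is already done by Theorem \ref{mainresult}(1); this corollary is essentially a specialization of that result to $q(s) = s+\alpha$ together with passage to the quotient by a central element.
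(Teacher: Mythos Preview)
Your proof is correct and follows essentially the same approach as the paper: both apply Theorem~\ref{mainresult}(1) with $q(s)=s+\alpha$ to obtain $(1)\Leftrightarrow(2)$ and the ``moreover'' clause, and both pass from $D_n[s]$ to $D_n$ by exploiting that $s+\alpha$ is central. The only cosmetic difference is that the paper phrases the passage $(2)\Leftrightarrow(3)$ via the intersection $K=J\cap D_n$, whereas you use the quotient map $\pi\colon D_n[s]\to D_n[s]/\langle s+\alpha\rangle\cong D_n$; these are equivalent formulations of the same reduction.
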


\begin{proof}
Take $J = D_n[s]\langle P_1(s),\ldots,P_k(s),f,s+\alpha \rangle$ and $K=J\cap D_n
=\langle P_1(-\alpha),\ldots P_k(-\alpha),f \rangle$.
Since
$$
  J = D[s] \Longleftrightarrow J\cap \K[s] = \K[s] \Longleftrightarrow K = D_n,
$$
and $\gcd(b_f(s),s+\alpha) = 1$ if and only if $b_f(-\alpha)\neq 0$,
the result follows from applying Theorem \ref{mainresult} using $q(s)=s+\alpha$.
\end{proof}

Once we know a system of generators of the annihilator of $f^s$ in $D_n[s]$, the last corollary provides an algorithm for checking whether a given rational number is a root of the $b$-function of $f$, using Gr\"obner bases in the Weyl algebra.

\begin{algorithm}[H]
\caption{\textsc{checkRoot1} (checks whether $\alpha\in\mathbb{Q}_{>0}$ is a root of $b_f(-s)$)}
\label{checkRoot1}
\algsetup{indent=2em}
\begin{algorithmic}
\STATE Input 1: $\{P_1(s),\ldots,P_k(s)\}\subseteq D_n[s]$, a system of
generators of $\ann_{D_n[s]}(f^s)$;
\STATE Input 2: $f$, a polynomial in $R_n$;\quad $\alpha$, a number in $\mathbb{Q}_{>0}$;
\STATE Output: \TRUE, if $\alpha$ is a root of $b_f(-s)$;\quad \FALSE, otherwise;\bigskip
\STATE $K:= \langle P_1(-\alpha),\ldots,P_k(-\alpha),f\rangle$;
  \hfill $\triangleright K = J \cap D_n \subseteq D_n$
\STATE $G:=$ reduced Gr\"obner basis of $K$ w.r.t. ANY term ordering;
\RETURN ($G\neq \{1\}$);
\end{algorithmic}
\end{algorithm}

\subsection{Multiplicities}

Two approaches to deal with multiplicities are presented. We start with a natural generalization of Corollary \ref{cor1}.

\begin{cor}\label{corAlg2}
Let $m_\alpha$ be the multiplicity of $\alpha$ as a root of $b_f(-s)$ and let us consider the ideals $J_i = \ann_{D_n[s]}(f^s) + \langle f, (s+\alpha)^{i+1} \rangle \subseteq D_n[s],\ i=0,\ldots,n$.
The following conditions are equivalent:
\begin{enumerate}
\item $m_\alpha > i$.
\item $J_i\cap\K[s] = \langle (s+\alpha)^{i+1} \rangle$.
\item $(s+\alpha)^i \notin J_i$.
\end{enumerate}
Moreover if $D_n[s] \supsetneq J_0 \supsetneq J_1 \supsetneq \cdots \supsetneq J_{m-1} = J_m$, then $m_\alpha = m$. In particular, $m\leq n$ and $J_{m-1}=J_m = \cdots = J_n$.
\end{cor}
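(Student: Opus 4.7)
The plan is to reduce all three equivalences to Theorem~\ref{mainresult}(1) applied with the central polynomial $q(s) = (s+\alpha)^{i+1}$, which gives
\[
J_i \cap \K[s] = \langle \gcd(b_f(s),\, (s+\alpha)^{i+1}) \rangle.
\]
Since $m_\alpha$ is the multiplicity of $\alpha$ as a root of $b_f(-s)$, equivalently of $-\alpha$ as a root of $b_f(s)$, we may write $b_f(s) = (s+\alpha)^{m_\alpha}\,g(s)$ with $g(-\alpha)\neq 0$, so that
\[
J_i \cap \K[s] = \langle (s+\alpha)^{\min(m_\alpha,\, i+1)} \rangle.
\]

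From this identification the equivalences follow almost for free. For (1)$\Leftrightarrow$(2): the condition $m_\alpha > i$ is the same as $\min(m_\alpha,i+1) = i+1$, which is precisely $J_i \cap \K[s] = \langle (s+\alpha)^{i+1}\rangle$. For (1)$\Leftrightarrow$(3): since $(s+\alpha)^i$ is a univariate polynomial, $(s+\alpha)^i \in J_i$ is equivalent to $(s+\alpha)^i \in J_i \cap \K[s]$, and the latter holds exactly when $\min(m_\alpha,i+1) \le i$, i.e.\ when $m_\alpha \le i$.

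For the ``moreover'' part the key structural observation is that
\[
J_{i-1} \;=\; J_i + D_n[s]\cdot (s+\alpha)^i,
\]
because $J_{i-1}$ and $J_i$ share the generators coming from $\ann_{D_n[s]}(f^s)$ and $f$, while $(s+\alpha)^{i+1} = (s+\alpha)\cdot(s+\alpha)^i$ already lies in $J_{i-1}$. Consequently $J_{i-1} = J_i$ if and only if $(s+\alpha)^i \in J_i$, which by the just-proved equivalence (1)$\Leftrightarrow$(3) amounts to $m_\alpha \le i$. Reading the chain $D_n[s] \supsetneq J_0 \supsetneq \cdots \supsetneq J_{m-1} = J_m$: the strict inclusions $J_{i-1} \supsetneq J_i$ for $i = 1,\dots,m-1$ force $m_\alpha \ge m$, while the equality at $i = m$ forces $m_\alpha \le m$; hence $m_\alpha = m$.

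Finally, the bound $m \le n$ is the classical Kashiwara estimate on the multiplicity of a root of the \BS polynomial on $\K^n$, which I would invoke as a known theorem; once $m_\alpha \le i$ holds for every $i \ge m$, the same criterion delivers the full stabilization $J_{m-1} = J_m = \cdots = J_n$. The main obstacle I anticipate is really only to record the structural identity $J_{i-1} = J_i + D_n[s](s+\alpha)^i$ cleanly, the delicate point being the automatic membership $(s+\alpha)^{i+1} \in J_{i-1}$, which itself uses that $\K[s]$ is central in $D_n[s]$; everything else is bookkeeping around Theorem~\ref{mainresult}.
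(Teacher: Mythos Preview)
Your proof is correct and follows essentially the same route as the paper: both apply Theorem~\ref{mainresult}(1) with $q(s)=(s+\alpha)^{i+1}$ to identify $J_i\cap\K[s]$, and both handle the chain condition via the criterion $(s+\alpha)^i\in J_i\Leftrightarrow m_\alpha\le i$; your explicit formula $J_i\cap\K[s]=\langle (s+\alpha)^{\min(m_\alpha,i+1)}\rangle$ and the structural identity $J_{i-1}=J_i+D_n[s](s+\alpha)^i$ just make transparent what the paper leaves implicit. One small omission: for $m=1$ your range $i=1,\dots,m-1$ is empty, so the lower bound $m_\alpha\ge 1$ must come from the first strict inclusion $D_n[s]\supsetneq J_0$ (i.e.\ $(s+\alpha)^0=1\notin J_0$), which your structural identity handles once you read $D_n[s]$ as ``$J_{-1}$''.
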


\begin{proof}
$1\Longleftrightarrow 2$. Since $m_\alpha>i$ if and only if
$\gcd(b_f(s),(s+\alpha)^{i+1}) = (s+\alpha)^{i+1}$, the equivalence follows by applying
Theorem \ref{mainresult} (1) using $q(s)=(s+\alpha)^{i+1}$.\\
$2\Longrightarrow 3$. If $(s+\alpha)^i \in J_i\cap \K[s]$, then clearly
$J_i\cap\K[s] \supsetneq \langle (s+\alpha)^{i+1} \rangle$.\\
$3\Longrightarrow 2$. Let $h(s)\in\K[s]$ be the monic generator of the ideal $J_i\cap\K[s]$.
Since $(s+\alpha)^{i+1}\in J_i\cap\K[s] = \langle h(s) \rangle$, there exists $j\leq i+1$
such that $h(s) = (s+\alpha)^j$. Suppose that $j\leq i$. Then,
$$
  (s+\alpha)^i = (s+\alpha)^{i-j} (s+\alpha)^j = (s+\alpha)^{i-j} h(s) \in J_i.
$$
That, however, contradicts 3 and thus $j=i+1$.

The rest of the assertion follows by applying the above result using
$i=m$ and $i=m-1$, since $(s+\alpha)^m \in J_m$ and $(s+\alpha)^{m-1}\notin J_{m-1}$
from the hypothesis.
\end{proof}

Again once we know a system of generators of the annihilator of $f^s$ in $D_n[s]$,
the last corollary provides an algorithm for checking whether a given
rational number is a root of the $b$-function of $f$ and for computing its
multiplicity, using Gr\"obner bases for differential operators.

\begin{algorithm}[H]
\caption{\textsc{checkRoot2} (computes the multiplicity of $\alpha\in\mathbb{Q}_{>0}$ as a root of $b_f(-s)$)}
\label{checkRoot2}
\algsetup{indent=2em}
\begin{algorithmic}
\STATE Input 1: $\{P_1(s),\ldots,P_k(s)\}\subseteq D_n[s]$, a system of generators of $\ann_{D_n[s]}(f^s)$;
\STATE Input 2: $f$, a polynomial in $R_n$;\quad $\alpha$, a number in $\mathbb{Q}_{>0}$;
\STATE Output: $m_\alpha$, the multiplicity of $\alpha$ as a root of $b_f(-s)$;\bigskip
\FOR {$i=0$ to $n$}
\STATE $J:= D_n[s]\cdot \langle P_1(s),\ldots,P_k(s), f, (s+\alpha)^{i+1}\rangle$;
  \hfill $\triangleright \ J_i$
\STATE $G:=$ Gr\"obner basis of $J$ w.r.t. ANY term ordering;
\STATE $r:=$ normal form of $(s+\alpha)^i$ with respect to $G$;\\
\IF {$r=0$}
\STATE $m_\alpha:=i$;
\hfill $\triangleright\ r=0 \Longrightarrow (s+\alpha)^i \in J_i$
\STATE {\bf break}
\hfill $\triangleright$\ leave the {\bf for} block
\ENDIF
\ENDFOR
\RETURN $m_\alpha$;
\end{algorithmic}
\end{algorithm}

\begin{proof} (of Algorithm \ref{checkRoot2}).\\
\textit{Termination:} The algorithm \textsc{checkRoot2} clearly terminates and
one only has to consider the loop from $0$ to $n$ because the multiplicity
of a root of $b_f(s)$ is at most 
$n$, see \cite{Saito94}.\\
\textit{Correctness:} Corollary \ref{corAlg2} implies the correctness of the method.
\end{proof}

\begin{remark}
There exists another version of {\tt checkRoot2} with just one step, due to the formula,
see Corollary \ref{corAlg2} above,
$$
  \big(\ann_{D_n[s]}(f^s) + D_n[s]\langle f, (s+\alpha)^n \rangle\big) \cap \K[s]
  = \langle (s+\alpha)^{m_\alpha} \rangle.
$$
However, this method only seems to be useful when the multiplicity is close to $n$,
otherwise {\tt checkRoot2} is more effective. The reason is that in general, the multiplicity
is far lower than the number of variables.
\end{remark}

This algorithm is much faster, than the computation of the whole Bernstein
polynomial via Gr\"obner bases, because no elimination ordering is needed
for computing a Gr\"obner basis of $J$. Also, the element $(s+\alpha)^{i+1}$,
added as a generator, seems to simplify tremendously such a computation. 
Actually, when $i=0$ it is possible to eliminate the variable $s$ in advance and
we can perform the whole computation in $D_n$, see Corollary \ref{cor1} (3) above.

Nevertheless, Algorithm \ref{checkRoot2} meets the problem to calculate on each step a
Gr\"obner basis $G_i$ for an ideal of the form $I+\langle (s+\alpha)^{i+1}\rangle$ and
the set $G_{i-1}$ is not used at all for such computation. A completely
new Gr\"obner basis has to be performed instead.
The classical idea of quotient and saturation are used to solve this obstruction.
In particular, the following result holds.

\begin{cor}\label{corAlg3}
Let $m_\alpha$ be the multiplicity of $\alpha$ as a root of $b_f(-s)$ and
let us consider the ideal $I = \ann_{D_n[s]}(f^s) + D_n[s] \langle f \rangle$. The
following conditions are equivalent:
\begin{enumerate}
\item $m_\alpha > i$.
\item $\big(\, I: (s+\alpha)^i\, \big) + D_n[s] \langle s+\alpha \rangle \neq D_n[s]$.
\item $\big(\, I: (s+\alpha)^i\, \big)|_{s=-\alpha} \neq D_n$.
\end{enumerate}
\end{cor}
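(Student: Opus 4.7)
The plan is to derive Corollary \ref{corAlg3} as a direct consequence of Theorem \ref{mainresult}, parts (1) and (2), applied in sequence, together with the fact that $s+\alpha$ is central in $D_n[s]$.

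First I would set $J := I : (s+\alpha)^i$ and compute $J \cap \K[s]$. Since $I \cap \K[s] = \langle b_f(s)\rangle$ by (\ref{bernstein2}), Theorem \ref{mainresult} (2) with $q(s) = (s+\alpha)^i$ yields
\[
  J \cap \K[s] = \langle b_f(s)\rangle : (s+\alpha)^i = \Big\langle \tfrac{b_f(s)}{\gcd(b_f(s),(s+\alpha)^i)} \Big\rangle.
\]
In particular $J \cap \K[s] \neq 0$, so Theorem \ref{mainresult} (1) applies to $J$.

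Next, I would apply Theorem \ref{mainresult} (1) to $J$ with $q(s) = s+\alpha$, obtaining
\[
  \bigl(J + D_n[s]\langle s+\alpha\rangle\bigr) \cap \K[s]
  = J\cap \K[s] + \langle s+\alpha\rangle
  = \bigl\langle \gcd\bigl(\tfrac{b_f(s)}{\gcd(b_f(s),(s+\alpha)^i)},\ s+\alpha\bigr)\bigr\rangle.
\]
The ideal $J + D_n[s]\langle s+\alpha\rangle$ equals the whole ring if and only if its intersection with $\K[s]$ is $\K[s]$, i.e. if and only if the right-hand gcd is $1$. A short case analysis on $m_\alpha$ versus $i$ shows: when $m_\alpha > i$ the factor $b_f(s)/(s+\alpha)^i$ still contains $s+\alpha$, so the gcd is $s+\alpha$ and the ideal is proper; when $m_\alpha \le i$ the quotient has no $s+\alpha$ factor, the gcd is $1$, and the sum is the whole ring. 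This gives the equivalence $(1)\Leftrightarrow (2)$.

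For $(2)\Leftrightarrow (3)$, I would use that $s+\alpha$ lies in the center of $D_n[s]$, so the natural surjection $D_n[s] \twoheadrightarrow D_n[s]/\langle s+\alpha\rangle \cong D_n$ given by $s \mapsto -\alpha$ is a ring homomorphism, and under this map the image of $J + D_n[s]\langle s+\alpha\rangle$ is precisely $J|_{s=-\alpha}$. Hence $J + D_n[s]\langle s+\alpha\rangle = D_n[s]$ if and only if $J|_{s=-\alpha} = D_n$, completing the equivalence. The only subtle point is checking that $J$ is closed enough under specialization for this quotient description to be valid, but this is immediate since $\langle s+\alpha\rangle$ is a two-sided ideal generated by a central element, so no reordering issues arise. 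I expect no serious obstacle here; the whole proof is essentially a transcription of Theorem \ref{mainresult} through the centrality of $s+\alpha$.
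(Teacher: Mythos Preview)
Your proof is correct and follows essentially the same route as the paper's: apply Theorem \ref{mainresult} (2) to identify $(I:(s+\alpha)^i)\cap\K[s]$ with $\langle b_f(s)/\gcd(b_f(s),(s+\alpha)^i)\rangle$, then Theorem \ref{mainresult} (1) with $q(s)=s+\alpha$ to reduce condition (2) to the divisibility $(s+\alpha)\mid b_f(s)/\gcd(b_f(s),(s+\alpha)^i)$, which is equivalent to $m_\alpha>i$. The paper omits an explicit argument for $(2)\Leftrightarrow(3)$ since it is the same quotient-by-a-central-element reasoning already used in Corollary \ref{cor1}; your treatment of this step is correct and makes it explicit.
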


\begin{proof}
Given $J\subseteq D_n[s]$ an ideal, we denote by $b_J(s)$
the monic generator of the ideal $J\cap \K[s]$. Then, from Theorem \ref{mainresult} (1),
condition 2 is satisfied if and only if $-\alpha$ is a root of $b_{I:(s+\alpha)^i}(s)$.
This univariate polynomial is nothing but $b_f(s)/\gcd(b_f(s),(s+\alpha)^i)$, due to
Theorem \ref{mainresult} (2). Now the claim follows from the obvious equivalence
$$
  m_\alpha > i \ \Longleftrightarrow\
  (s+\alpha) \,\Big|\, \frac{b_f(s)}{\gcd(b_f(s),(s+\alpha)^i)}.
$$
\end{proof}

Since $s+\alpha$ belongs to the center of $D_n[s]$, the ideal $I:(s+\alpha)^i$
can recursively be computed by the formulas
\begin{eqnarray*}
I:(s+\alpha) &=& (I\cap D_n[s] \langle s+\alpha\rangle)/(s+\alpha),\\
I:(s+\alpha)^i &=& (I:(s+\alpha)^{i-1}):(s+\alpha).
\end{eqnarray*}

The following is a sketch of another algorithm for computing multiplicities using
quotient ideals. The termination and correctness follow from the above corollary.

\begin{algorithm}[H]
\caption{\textsc{checkRoot3} (computes the multiplicity of $\alpha\in\mathbb{Q}_{>0}$ as a root of $b_f(-s)$)}
\label{checkRoot3}
\algsetup{indent=2em}
\begin{algorithmic}
\STATE Input 1: $\{P_1(s),\ldots,P_k(s)\}\subseteq D_n[s]$, a system of generators of $\ann_{D_n[s]}(f^s)$;
\STATE Input 2: $f$, a polynomial in $R_n$;\quad $\alpha$, a number in $\mathbb{Q}_{>0}$;
\STATE Output: $m_\alpha$, the multiplicity of $\alpha$ as a root of $b_f(-s)$;\bigskip
\STATE $m:=0$;\quad $I:= D_n[s] \langle P_1(s),\ldots,P_k(s),f \rangle$;
\quad $J:=I+D_n[s]\langle s+\alpha \rangle$;
\WHILE{$G \neq \{1\}$}
\STATE $m := m+1$;
\STATE $I:=I:(s+\alpha)$;
  \hfill $\triangleright \ I:(s+\alpha)^i$
\STATE $J:=I+D_n[s]\langle s+\alpha \rangle$; \quad (or $J:= I|_{s=-\alpha}$)
\STATE $G:=$ reduced Gr\"obner basis of $J$ w.r.t. ANY term ordering;
\ENDWHILE
\RETURN $m$;
\end{algorithmic}
\end{algorithm}

\begin{remark}
Several obvious modifications of the presented algorithms can be useful depending
on the context. Assume, for instance, that $q(s)$ is a known factor of the Bernstein-Sato
polynomial and one is interested in computing the rest of $b_f(s)$. Then
the ideal $I: q(s)$ contains such information. This easy observation can help us in
some special situations.
\end{remark}

\begin{remark}
\label{reducedB}
Define the \textbf{reduced \BS polynomial} of $f\in R_n$ to be $b'_f(s) = b_f(s)/(s+1)$. 
Recall, that the \textbf{Jacobian ideal} of $f$ is $J_f = \langle \tfrac{\d f}{\d x_1}, \ldots, \tfrac{\d f}{\d x_n} \rangle\subset \K[x]$. 
It is known, that taking $\langle f \rangle + J_f$ 
instead of $\langle f \rangle$ has the following consequence
\[
(\ann_{D[s]} f^s + \langle f, \tfrac{\d f}{\d x_1}, \ldots, \tfrac{\d f}{\d x_n} \rangle ) \cap \K[s] = \langle b'_f(s) \rangle = \langle \tfrac{b_f(s)}{s+1} \rangle.
\]
Hence, all the algorithms above can be modified to this setting, resulting in more effective computations. This is the way it should be done in the implementation. We decided, however, not to modify the description of algorithms in order to keep the exposition easier.
\end{remark}

\subsection{Local versus global $b$-functions}

Here we are interested in what kind of information one can obtain
from the global $b$-function for computing the local ones and
conversely. In order to avoid theoretical problems we will assume in
this paragraph that the ground field is $\C$.

Several algorithms to obtain the local $b$-function of a hypersurface $f$
have been known without any Gr\"obner bases computation but under strong
conditions on $f$. For instance, it was shown in \cite{Malgrange75} that the
minimal polynomial of $-\d_t t$ acting on some vector space of finite
dimension coincides with the reduced local Bernstein polynomial,
assuming that the singularity is isolated.

\begin{remark}
\label{Dloc}
Recall, that the singular locus of $V(f)$ is $V(\langle f,\tfrac{\d f}{\d x_1}, \ldots, \tfrac{\d f}{\d x_n} \rangle)$. One can define the \textbf{local $b$-function} or \textbf{local \BS polynomial}
as follows. Let $p\in\C^n$ be a point and $\mathfrak{m}_p = \langle \{ x_1 - p_1, \ldots, x_n - p_n \} \rangle \subset R_n$ the corresponding maximal ideal. Let $D_p$ be the local Weyl algebra at $p$, that is Weyl algebra with coefficients from $\C[x_1,\ldots,x_n]_{p}$ instead of $R_n=\C[x_1,\ldots,x_n]$. From the Bernstein's functional equation (\ref{bernstein}) it follows
that $\exists P(s)\in D[s], b_f(s)\in\K[s]$, such that $P(s)f\cdot f^s = b(s)\cdot f^s$ holds. Hence,
since over $\C[x_1,\ldots,x_n]_{p}$ we have got polynomial invertible, there exist
$\exists P_p(s)\in D_p[s], b_{f,p}(s)\in\K[s]$, such that $P_p(s)f\cdot f^s = b_{f,p}(s)\cdot f^s$ holds. We define local \BS polynomial to be the univariate monic polynomial $b_{f,p}(s)$ of the minimal degree, such that the above identity holds.
\end{remark}

\begin{theo}(Brian\c con-Maisonobe (unpublished), Mebkhout-Narv\'aez \cite{MN91})
\label{localBS}
Let $b_{f,p}(s)$ the local $b$-function of $f$ at the point $p\in\C^n$
and $b_f(s)$ the global one. Then 
$b_f(s) = \lcm_{p\in\C^n} b_{f,p}(s) = \lcm_{p\in\Sigma(f)} b_{f,p}(s)$.
\end{theo}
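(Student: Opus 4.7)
The plan is to interpret both sides through the $D[s]$-module $N = D[s]\cdot f^s / D[s] f\cdot f^s$. By (\ref{bernstein2}), $\langle b_f(s)\rangle$ equals the $\K[s]$-annihilator of the class $\overline{f^s}\in N$, and analogously $\langle b_{f,p}(s)\rangle$ is the $\K[s]$-annihilator of $\overline{f^s}$ in $R_{n,p}\otimes_{R_n} N$, which by flatness of localization coincides with $D_p[s]\cdot f^s/D_p[s] f\cdot f^s$.

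The inclusion $b_{f,p}(s)\mid b_f(s)$ for every $p$ is immediate: extending scalars along the flat map $R_n\hookrightarrow R_{n,p}$ in the global Bernstein equation produces a local Bernstein equation with the same $b_f(s)$, and minimality of $b_{f,p}(s)$ forces the divisibility. Hence $\lcm_{p\in\C^n} b_{f,p}(s)\mid b_f(s)$. For the reverse direction, set $L(s)=\lcm_{p\in\C^n} b_{f,p}(s)$; then $L(s)\overline{f^s}=0$ in $R_{n,p}\otimes_{R_n} N$ for every $p$ because $b_{f,p}(s)\mid L(s)$. Regarding $N$ as an $R_n$-module and invoking the elementary local-global principle (an element of an $R_n$-module that vanishes in every localization at a maximal ideal is already zero; by Nullstellensatz the maximal ideals of $R_n$ correspond bijectively to points of $\C^n$), we conclude $L(s)\overline{f^s}=0$ in $N$ itself, i.e.\ $b_f(s)\mid L(s)$.

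For the reduction to $\Sigma(f)$, I would dispose of points $p\notin\Sigma(f)$ case by case. If $f(p)\neq 0$, then $f$ is a unit in $R_{n,p}$ and the operator $f^{-1}\in D_p[s]$ satisfies $f^{-1}\cdot f\cdot f^s=f^s$, forcing $b_{f,p}(s)=1$. If $p$ is a smooth point of $V(f)$, then after a local change of coordinates $f=u\cdot x_i$ for some unit $u$, and a direct computation mimicking $\partial_1 x_1\cdot x_1^s=(s+1)x_1^s$ yields $b_{f,p}(s)=s+1$. The classical fact that $(s+1)\mid b_{f,q}(s)$ for every $q\in V(f)$---and in particular for every $q\in\Sigma(f)$, provided $\Sigma(f)$ is non-empty---then absorbs the smooth-point contribution into $\lcm_{p\in\Sigma(f)} b_{f,p}(s)$; the degenerate case $\Sigma(f)=\emptyset$ is handled directly.

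The principal obstacle is the gluing step in the reverse divisibility, where infinitely many pointwise annihilations must collapse into a single global one. The simplification that makes this tractable is that $N$ is accessed only through its underlying $R_n$-module structure, so the elementary commutative local-global principle suffices and no sheaf-theoretic framework on the Weyl algebra is required.
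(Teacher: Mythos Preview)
The paper does not supply its own proof of Theorem~\ref{localBS}; it is simply stated with attribution to Brian\c{c}on--Maisonobe and Mebkhout--Narv\'aez. Your argument is essentially correct and is the standard one: reducing to the commutative local--global principle by viewing $L(s)\overline{f^s}$ as an element of the $R_n$-module $N$ is exactly the right move, and flatness of $R_n\to R_{n,p}$ identifies $R_{n,p}\otimes_{R_n} N$ with $D_p[s]\cdot f^s/D_p[s]\,f\cdot f^s$ as you claim.

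Two small remarks. First, at a smooth point $p\in V(f)$ the assertion ``after a local change of coordinates $f=u\cdot x_i$'' is valid in the analytic or formal local ring, but generally fails in the algebraic localization $R_{n,p}$ used in Remark~\ref{Dloc} (for instance $f=y-x^2$ at the origin is not a unit times a coordinate in $\C[x,y]_{(x,y)}$). The conclusion $b_{f,p}(s)=s+1$ is nonetheless immediate without any coordinate change: some $\tfrac{\partial f}{\partial x_i}$ is a unit in $R_{n,p}$, and then $(\tfrac{\partial f}{\partial x_i})^{-1}\partial_i\cdot f\cdot f^s=(s+1)f^s$ shows $b_{f,p}(s)\mid s+1$, while $f\in\mathfrak{m}_p$ rules out $b_{f,p}(s)=1$. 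Second, the second equality as stated actually fails when $V(f)\neq\emptyset$ but $\Sigma(f)=\emptyset$ (the left side is $s+1$, the right side is the empty $\lcm$); your phrase ``handled directly'' should be read as flagging this as a defect of the theorem's formulation rather than something your proof covers.
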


The previous Theorem can be very useful for computing the global $b$-function
using the local ones. Let us see an example.

\begin{ex}\label{excuplines}
Let $\mathcal{C}$ the curve in $\C^2$ given by the equation $f = (y^2-x^3)(3x-2y-1)(x+2y)$.
This curve has three isolated singular points $(0,0)$, $(1,1)$ and $(1/4,-1/8)$. The following is its real picture.

 \begin{figure}[H]
 \centering
 \includegraphics[width=.2\textwidth]{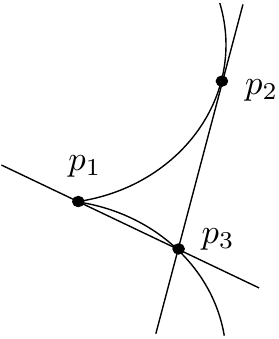}
 \caption{The cup $(2,3)$ with two lines.}
 \end{figure}

The library {\tt gmssing.lib} contains a procedure {\tt bernstein}, which computes
the local $b$-function at the origin.
Moving to the corresponding points we can also compute $b_{f,p_i}(s)$.
\begin{eqnarray*}
b_{f,p_1}(s) &=& (s+1)^2 (s+5/8) (s+7/8) (s+9/8) (s+11/8)\\
b_{f,p_2}(s) &=& (s+1)^2 (s+3/4) (s+5/4)\\
b_{f,p_3}(s) &=& (s+1)^2 (s+2/3) (s+4/3)
\end{eqnarray*}
From this information and using Theorem \ref{localBS}, the global $b$-function is
\[
(s+1)^2 (s+2/3) (s+5/8) (s+3/4) (s+7/8) (s+4/3)  (s+5/4)  (s+9/8) (s+11/8).
\]
\end{ex}

The computation of the global $b$-function with Theorem \ref{localBS}
is effective, when the singular locus consists of finitely many isolated singular points.
The {\sc Singular} library {\tt gmssing.lib} implemented
by M. Schulze \cite{Gmssinglib} and based on his work \cite{Schulze04} allows one to compute invariants related to the the Gauss-Manin system of an isolated hypersurface singularity.
In the non-isolated case the situation is more complicated, since no
Gauss-Manin connection exists. For computing the local $b$-function
in this case (which is important on its own) we suggest using the global 
$b$-function as an upper bound and a local version of the {\tt checkRoot} algorithm, 
see Section \ref{localcheckRoot} below.

In \cite{Nakayama09}, H.~Nakayama presented an algorithm for computing local $b$-functions. One step in his algorithm uses a bound for the multiplicity of a given rational root of the global
$b$-function. Then the algorithm checks if this multiplicity agrees with
the local one. This approach is very similar to our \texttt{checkRoot} algorithm.



\subsubsection{Localization of non-commutative rings}

We recall some properties of rings of fractions in non-commutative
setting. The reader is referred to \cite{GW04} and \cite{MR01} for further details.

\begin{defi}
Let $R$ be a ring and $S\subseteq R$ a multiplicatively closed set. A {\em left ring
of fractions} for $R$ with respect to $S$ is a ring homomorphism $\phi:R\to Q$
such that:
\begin{enumerate}
\item $\phi(s)$ is a unit of $Q$ for all $s\in S$.
\item Each element of $Q$ has the form for $\phi(s)^{-1}\phi(r)$ for some $r\in R$
and $s\in S$.
\item $\ker (\phi) = \{r\in R \mid s r = 0 \,\text{ for some\, $s\in S$}\}$.
\end{enumerate}
{\em Right rings of fractions} are defined analogously.
\end{defi}

\begin{theo}\label{localization}
There exists a left ring of fractions for $R$ with respect to $S$ if and only if $S$
is a left denominator set, that is, the following conditions hold:
\begin{itemize}
\item Left Ore condition: for each $r\in R$ and $s\in S$, there exist $r'\in R$ and
$s'\in S$ such that $s' r = r' s$, that is, $Sr\cap Rs \neq \emptyset$.
\item Left reversible: if $r s=0$ for some $r\in R$ and $s\in S$, then $\exists r' \in R$
such that $s r'=0$.
\end{itemize}
In such a case, the pair $(Q,\phi)$ is universal for homomorphisms $\varphi:R\to T$
such that $\varphi(S)$ consists of units of $T$ and therefore $Q$ is unique up to
unique isomorphism. Moreover, if $R$ also has a right ring of fractions $Q'$ with
respect to $S$ then $Q\simeq Q'$. 
\end{theo}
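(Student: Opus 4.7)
My plan is to treat this as the classical Ore localization theorem and split the proof into four tasks: (a) necessity of the Ore and reversibility conditions from the existence of $(Q,\phi)$; (b) the explicit construction of $Q$ when these conditions hold; (c) the universal property; (d) the deduction of uniqueness and of $Q\simeq Q'$ from (c). For (a), assume $(Q,\phi)$ exists. Given $r\in R$ and $s\in S$, the element $\phi(r)\phi(s)^{-1}\in Q$ can by property (2) be rewritten as $\phi(s')^{-1}\phi(r')$ with $s'\in S$, $r'\in R$. Clearing denominators yields $\phi(s'r-r's)=0$, and property (3) produces $s''\in S$ with $s''s'r = s''r's$; since $s''s'\in S$, this is the left Ore condition. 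Similarly, if $rs=0$ with $s\in S$, then $\phi(r)\phi(s)=0$ and $\phi(s)$ is a unit, so $\phi(r)=0$, and (3) furnishes the element required by left reversibility.

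For (b), I would mimic the commutative construction with pairs $(s,r)\in S\times R$ interpreted as $s^{-1}r$. Define $(s_1,r_1)\sim(s_2,r_2)$ iff there exist $u_1,u_2\in R$ with $u_1s_1=u_2s_2\in S$ and $u_1r_1=u_2r_2$; the Ore condition supplies common left denominators so that reflexivity, symmetry and (most delicately) transitivity can be checked. Addition is $[s_1,r_1]+[s_2,r_2]:=[u_1s_1,\,u_1r_1+u_2r_2]$ after choosing $u_1,u_2$ with $u_1s_1=u_2s_2\in S$ via Ore, and multiplication is $[s_1,r_1]\cdot[s_2,r_2]:=[u s_1,\,r' r_2]$ after using Ore to write $u r_1 = r' s_2$. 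One then verifies that these operations are independent of the representatives and of the auxiliary choices, that the ring axioms hold, and that $\phi(r):=[1,r]$ is a ring homomorphism satisfying (1), (2), (3); property (3) is exactly where left reversibility is needed to conclude $\phi(r)=0$ from $\phi(s)\phi(r)=0$.

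For (c), given any $\varphi:R\to T$ with $\varphi(S)\subseteq T^{\times}$, I would set $\psi([s,r]):=\varphi(s)^{-1}\varphi(r)$, check well-definedness on equivalence classes (using that $u_1s_1=u_2s_2$ forces $\varphi(u_1)\varphi(s_1)=\varphi(u_2)\varphi(s_2)$ in $T^{\times}$), verify it is the unique ring map with $\psi\circ\phi=\varphi$, and conclude by a standard universal-property argument that $Q$ is unique up to unique isomorphism. For the last assertion, if $Q'$ is a right ring of fractions for $R$ with respect to $S$, both $Q$ and $Q'$ invert $S$, so by the universal property (which is symmetric in the sense that a right ring of fractions also satisfies the analogous universal statement) there are canonical maps $Q\to Q'$ and $Q'\to Q$ whose compositions are identities, giving $Q\simeq Q'$.

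The principal technical obstacle, as always in the Ore construction, will be verifying transitivity of $\sim$ and proving that the ring operations are representative-independent: each verification requires iterating the Ore condition to align several denominators simultaneously, and the bookkeeping of the auxiliary multipliers produced by Ore is the place where mistakes hide. Once this groundwork is done, the remaining pieces (universal property, uniqueness, left/right identification) are essentially formal.
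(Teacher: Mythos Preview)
Your sketch is correct and is the standard Ore construction found in the references the paper cites. Note, however, that the paper does not actually prove this theorem: it is stated as a classical result, with the reader referred to Goodearl--Warfield and McConnell--Robson for details, so there is no ``paper's own proof'' to compare against. Your outline is precisely the argument one finds in those sources.

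One small remark: the paper's statement of left reversibility appears to contain a typo (it says ``$\exists\, r'\in R$ such that $s r'=0$'', whereas the correct condition is ``$\exists\, s'\in S$ such that $s' r=0$''). Your part (a) implicitly uses the correct version, which is exactly what property (3) of the definition delivers; just be aware of the discrepancy if you quote the statement verbatim.
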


Because of the uniqueness, the left ring of fractions $Q$ (when it exists)
is often denoted by
$S^{-1}R$, and the natural map $\phi: R \to S^{-1}R$ is
called the {\em localization map}. To simplify notation the elements of
$S^{-1}R$ are denoted by $s^{-1} r$, even when $\ker\phi\neq 0$. Two quotients
$s_1^{-1} r_1$ and $s_2^{-1} r_2$ are equal if and only if there exist $s\in S$ and
$a\in R$ such that $a s_1 = s s_2$ and $a r_1 = s r_2$.
The localization for left (resp. right) modules can be generalized in the obvious way
and it is verified $S^{-1} M \cong S^{-1} R \otimes_R M$
(resp. $M S^{-1} \cong M\otimes_R R S^{-1}$).

Recall the following two classical results on localizations.

\begin{lemma}\label{lem1}
Let $R_1\stackrel{i}{\hookrightarrow} R_2$ be a ring extension and $S\subset R_1$ a multiplicatively closed set. Assume $S^{-1} R_1$ and $S^{-1} R_2$ exist and consider
the corresponding localization maps $\phi_1: R_1 \to S^{-1} R_1$ and
$\phi_2: R_2 \to S^{-1} R_2$. Let $j: S^{-1} R_1 \to S^{-1} R_2$ be the map induced
by $i$. Then $j$ is injective and for every left ideal $I\subseteq R_2$ one has
$$
  S^{-1} I \cap S^{-1} R_1 = S^{-1} (I\cap R_1).
$$
\end{lemma}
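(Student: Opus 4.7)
The plan is to handle the two claims separately, using only the kernel description and the equality-of-fractions criterion recalled just above the statement. First, for the injectivity of $j$, I take an element $\phi_1(s)^{-1}\phi_1(r)$ of $S^{-1}R_1$ whose image $\phi_2(s)^{-1}\phi_2(r)$ vanishes in $S^{-1}R_2$. Clearing the invertible factor $\phi_2(s)$ gives $\phi_2(r)=0$, so the kernel description yields $s'\in S$ with $s'r=0$ in $R_2$. Since $s',r\in R_1$ and $i$ is injective, the same relation already holds in $R_1$, so $r\in\ker\phi_1$ and the original element was zero to begin with.

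For the ideal identity, the inclusion $S^{-1}(I\cap R_1)\subseteq S^{-1}I\cap S^{-1}R_1$ is immediate from the definitions. For the reverse inclusion, take $x\in S^{-1}I\cap S^{-1}R_1$, viewed inside $S^{-1}R_2$ via $j$. Using that $x\in S^{-1}R_1$, I write $x=\phi_2(s_2)^{-1}\phi_2(r_2)$ with $r_2\in R_1$ and $s_2\in S$; the identity $\phi_2(r_2)=\phi_2(s_2)\cdot x$ then shows $\phi_2(r_2)\in S^{-1}I$, so there exist $b\in I$ and $t\in S$ with $\phi_2(t)\phi_2(r_2)=\phi_2(b)$. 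Hence $tr_2-b\in\ker\phi_2$, and some $u\in S$ annihilates it from the left, giving $utr_2=ub\in I$. Since $u,t,r_2\in R_1$, we have $utr_2\in I\cap R_1$, while $uts_2\in S$. A direct application of the equality criterion (with auxiliary element $a=ut$) yields $\phi_2(s_2)^{-1}\phi_2(r_2)=\phi_2(uts_2)^{-1}\phi_2(utr_2)\in S^{-1}(I\cap R_1)$, as required.

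The main obstacle is the non-commutative bookkeeping: one cannot simply clear denominators, and one must arrange that the auxiliary elements produced by the Ore and reversibility conditions live in $R_1$. This is precisely why I start from the representation of $x$ coming from $x\in S^{-1}R_1$ rather than the one coming from $x\in S^{-1}I\subseteq S^{-1}R_2$; the former guarantees that $r_2$ already lies in $R_1$, and since $t,u\in S\subseteq R_1$ the elements $utr_2$ and $uts_2$ stay in $R_1$, keeping the resulting quotient inside $S^{-1}(I\cap R_1)$.
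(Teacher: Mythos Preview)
Your proof is correct. Note, however, that the paper does not actually supply a proof of this lemma: it is introduced with the phrase ``Recall the following two classical results on localizations'' and simply stated without argument. So there is no ``paper's own proof'' to compare against; your write-up fills in a gap the authors deliberately left to the reader.

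A couple of minor remarks on presentation. In the injectivity argument you could mention explicitly that the conclusion $x=\phi_1(uts_2)^{-1}\phi_1(utr_2)$ in $S^{-1}R_1$ (rather than in $S^{-1}R_2$) uses the injectivity of $j$ you just established; you do this implicitly by identifying $S^{-1}R_1$ with its image, but making it explicit closes the loop cleanly. Also, when you assert ``$\phi_2(r_2)\in S^{-1}I$, so there exist $b\in I$ and $t\in S$ with $\phi_2(t)\phi_2(r_2)=\phi_2(b)$'', you are using that every element of the extended ideal $S^{-1}I$ is a single fraction $\phi_2(t)^{-1}\phi_2(b)$ with $b\in I$; this is standard (it follows from $I$ being a left $R_2$-submodule and the usual description of $S^{-1}M$), but worth a half-sentence since the paper does not spell it out either. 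Your closing paragraph explaining why one must start from the $S^{-1}R_1$-representation of $x$ is a nice touch and identifies exactly where the argument would break down in the non-commutative setting otherwise.
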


Note that in the previous lemma, $S^{-1} I$ is the extension of $I$ to $S^{-1} R_2$
while $S^{-1}(I\cap R_1)$ is the extension of $I\cap R_1$ to $S^{-1} R_1$.

\begin{lemma}\label{lem2}
Let $R$ be a ring, $S\subseteq R$ a multiplicatively closed set and $I\subseteq R$ a left ideal.
Assume $S^{-1}R$ exists. Then $S^{-1}I$ is not the whole ring $S^{-1}R$
if and only if $I\cap S = \emptyset$.
\end{lemma}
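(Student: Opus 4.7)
\emph{Proof plan.} The strategy is to handle the two directions of the biconditional separately, using only the basic properties of the localization map $\phi\colon R \to S^{-1}R$ from Theorem \ref{localization}.

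For the easy direction, I would assume $I\cap S \neq \emptyset$ and pick any $s\in I\cap S$. Its image $\phi(s)$ lies in the extension $S^{-1}I$ (it is the image of an element of $I$ under $\phi$) and is a unit of $S^{-1}R$ by the first defining property of a left ring of fractions. A left ideal that contains a unit must coincide with the whole ring, so $S^{-1}I = S^{-1}R$.

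For the converse, I would argue contrapositively: suppose $S^{-1}I = S^{-1}R$, so that $1_{S^{-1}R}$ lies in $S^{-1}I$ and can be written as $s^{-1}r$ for some $s\in S$ and $r\in I$. Multiplying on the left by $\phi(s)$ and using the defining identity $\phi(s)\cdot (s^{-1}r)=\phi(r)$, I obtain $\phi(s)=\phi(r)$, i.e.\ $s-r\in\ker\phi$. The third clause in the definition of a left ring of fractions then supplies some $t\in S$ with $t(s-r)=0$, i.e.\ $ts=tr$. Since $S$ is multiplicatively closed, $ts\in S$; since $I$ is a left ideal and $r\in I$, $tr\in I$; so $ts=tr$ is a common element of $I$ and $S$, proving $I\cap S\neq\emptyset$.

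The one subtlety to watch for is that $\phi$ may have a nonzero kernel, so one cannot simply regard $R$ as a subring of $S^{-1}R$ and clear denominators inside $R$. The remedy is the explicit description of $\ker\phi$ as the $S$-torsion of $R$, which is built into the definition of a left ring of fractions and guaranteed by the left reversibility condition in Theorem \ref{localization}. Apart from that, both implications are routine manipulations of fractions.
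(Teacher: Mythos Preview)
Your argument is correct. The paper does not actually prove Lemma~\ref{lem2}; it simply records it as one of two ``classical results on localizations'' and moves on, so there is no proof in the paper to compare against.
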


\begin{ex}
Let $R = D$ be the classical $n$-Weyl algebra and $S=\K[{\bf x}]\setminus \mathfrak{m}_p$,
where $p\in \K^n$ is an arbitrary point, cf. Remark \ref{Dloc}. 
Then $S$ is a left and right denominator set as in the statement of Theorem \ref{localization}, and the localization
$(\K[{\bf x}]\setminus \mathfrak{m}_p)^{-1} D$ is naturally isomorphic to
$D_p$. Analogous construction also holds for the extension $D[s] = \K[s]\otimes_\K D$.
\end{ex}

\subsubsection{Local version of the {\tt checkRoot1} algorithm}\label{localcheckRoot}

Theorem \ref{mainresult} is general enough to apply also for checking rational roots of
local Bernstein-Sato polynomials.
To simplify the exposition, we concentrate our attention on the local version of
{\tt checkRoot1} algorithm. See Section \ref{stratum} for other generalizations.

Let $f\in \C[x_1,\ldots,x_n]$ be a polynomial, $p\in \C^n$ and $\alpha\in \mathbb{Q}$.
Then the first part of Theorem \ref{mainresult}, see also Corollary \ref{cor1}, tells us
that $(s+\alpha)$ is a factor of the local $b$-function at $p$ if and only if the left ideal
\begin{equation}\label{idealOne}
  \ann_{D_p[s]}(f^s) + D_p[s] \langle f, s+\alpha \rangle
\end{equation}
is not the whole ring $D_p[s]$. From Lemma \ref{lem1} using $R_1 = D[s]$,
$R_2 = D\langle t, \partial t \rangle := D \otimes_\K \K\langle t, \partial t \mid \d t \cdot t = t \cdot \d t +1 \rangle$, $S=\C[{\bf x}]\setminus \mathfrak{m}_p$
and $I= \ann_{D\langle t, \partial t \rangle}(f^s) = I_f$ the Malgrange ideal
associated with $f$, one obtains
$$
  \ann_{D_p[s]}(f^s) = D_p[s] \ann_{D[s]}(f^s).
$$

\begin{prop}\label{proplocalCR}
Let $\{P_1(s),\ldots,P_k(s)\}$ be a system of generator of $\ann_{D[s]}(f^s)$
and consider the ideal $I = D[s] \langle P_1(s),\ldots,P_k(s),f,s+\alpha \rangle$.
Then we have
$$
  (s+\alpha)\, |\, b_{f,p}(s) \ \Longleftrightarrow \
  p\in V(I\cap \C[{\bf x}]).
$$
\end{prop}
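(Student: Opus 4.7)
The plan is to chain together the earlier localization framework with Theorem \ref{mainresult}, turning the divisibility condition into a question about proper ideals, then into an intersection condition that visibly describes a variety.

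First, I would recall from the discussion preceding the proposition that $(s+\alpha)$ divides $b_{f,p}(s)$ exactly when the left ideal
\[
  \ann_{D_p[s]}(f^s) + D_p[s]\langle f,s+\alpha\rangle
\]
is a \emph{proper} ideal of $D_p[s]$. This is the local analogue of Corollary \ref{cor1} applied to the local Bernstein--Sato polynomial $b_{f,p}(s)$, and it is precisely what Theorem \ref{mainresult}(1) gives after noting that for a nonzero $h(s)\in\K[s]$ one has $h(s)\in\mathfrak{m}$ iff $\gcd(b_{f,p}(s),h(s))\neq 1$.

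Next, I would identify this local ideal as a localization of $I$. Using $\ann_{D_p[s]}(f^s)=D_p[s]\,\ann_{D[s]}(f^s)$ (stated just before the proposition) and setting $S=\C[\mathbf{x}]\setminus \mathfrak{m}_p$, the example at the end of the subsection on localization shows that $S^{-1}D[s]\cong D_p[s]$ and therefore
\[
  \ann_{D_p[s]}(f^s)+D_p[s]\langle f,s+\alpha\rangle = S^{-1}I.
\]
Now Lemma \ref{lem2} converts properness into a disjointness condition: $S^{-1}I\neq S^{-1}D[s]$ iff $I\cap S=\emptyset$.

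Finally, I would translate $I\cap S=\emptyset$ into the stated variety condition. Since $S\subseteq\C[\mathbf{x}]\subseteq D[s]$, an element of $I\cap S$ is nothing but an element of $I\cap\C[\mathbf{x}]$ which does not vanish at $p$. Hence
\[
  I\cap S=\emptyset \iff I\cap\C[\mathbf{x}]\subseteq \mathfrak{m}_p \iff p\in V(I\cap\C[\mathbf{x}]),
\]
and combining the three equivalences yields the proposition. The only delicate point is checking that the Ore/denominator hypotheses needed for Lemma \ref{lem1} and Lemma \ref{lem2} are genuinely in force for $S$ acting on $D[s]$, but this is exactly the content of the example preceding the proposition, so no new work is required.
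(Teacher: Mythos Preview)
Your proof is correct and follows essentially the same route as the paper: identify the local ideal in (\ref{idealOne}) with $S^{-1}I$ via the preceding discussion, reduce divisibility to properness of this localized ideal using Theorem~\ref{mainresult}, and then apply Lemma~\ref{lem2} with $S=\C[\mathbf{x}]\setminus\mathfrak{m}_p$ to obtain the chain $S^{-1}I\neq D_p[s]\iff I\cap S=\emptyset\iff I\cap\C[\mathbf{x}]\subseteq\mathfrak{m}_p$. The paper's proof is slightly terser but the logical structure is identical.
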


\begin{proof}
From the above discussion, $D_p[s] I$ equals the ideal of the equation (\ref{idealOne})
and thus $(s+\alpha)$ is a factor of $b_{f,p}(s)$ iff $D_p[s] I\neq D_p[s]$.
Now, by Lemma \ref{lem2} using $R=D[s]$ and $S=\C[{\bf x}]\setminus \mathfrak{m}_p$
$$
  D_p[s] I \neq D_p[s] \ \Longleftrightarrow \
  I\cap (\C[{\bf x}]\setminus \mathfrak{m}_p) = \emptyset \ \Longleftrightarrow \
  I\cap \C[{\bf x}] \subseteq \mathfrak{m}_p
$$
and the claim follows.
\end{proof}

There are several ways to check whether an ideal $I\subseteq D_p[s]$ is proper or not.
However, it is an open problem to decide which one is more efficient. Mora division
and standard bases techniques seem to be more suitable in this case,
since otherwise a (global) elimination ordering is needed. On the other hand,
using this approach, such orderings are unavoidable for obtaining the stratification
associated with local $b$-functions, see Section \ref{stratum} where several examples
are showed.

\subsection{$b$-functions with respect to weights and {\tt checkRoot}}

The $b$-function associated with a holonomic ideal with respect to a weight is presented.
We refer \cite{SST00} for the details.
Let $0 \neq w \in \R^n_{\geq 0}$ and consider the $V$-filtration with respect to $w$, $\left\{ V_m \mid m \in \Z \right\} = V$ on $D$   where $V_m$ is spanned by $\left\{ x^{\alpha} \d^{\beta} \mid -w \alpha + w \beta \leq m \right\}$ over $\K$.
That is, $x_i$ and $\d_i$ get weights $-w_i$ and $w_i$ respectively. Note, that with respect to such weights the relation $\d_i x_i = x_i \d_i + 1$ is homogeneous of degree $0$.
The associated graded ring $\gr^V(D) = \bigoplus_{m \in \Z} V_m / V_{m-1}$
is isomorphic to $D$, which allows us to identify them.

For a non-zero operator
\[
  P = \sum_{\alpha, \beta \in \N^n} a_{\alpha \beta} x^{\alpha} \d^{\beta} \in D,
\]
the maximum $\max_{\alpha, \beta} \{ -w \alpha + w \beta \mid c_{\alpha \beta} \neq 0\} \in \R$ is denoted by $\ord^V(P)$ and the principal symbol of $P$ is the $V$-homogeneous
operator given by
\[
\sigma^V(P) := \sum_{-w \alpha + w \beta = \ord_V(P)}
a_{\alpha \beta} x^{\alpha} \d^{\beta}.
\]
Additionally, for a given ideal $I\subseteq D$, the associated graded ideal
is defined as the vector space spanned by all its principal symbols, that is,
$\gr^V(I) := \K \cdot \{ \sigma^V(P) \mid P \in I \}$.

Sometimes, the principal symbol (resp. associated graded ideal) is called the initial form
(resp. initial ideal) and it is denoted by $\ini_{(-w,w)}(P)$ (resp. $\ini_{(-w,w)}(I)$).

\begin{defi}
Let $I\subset D$ be a holonomic ideal. Consider $0 \neq w \in \R^n_{\geq 0}$ and
$s := \sum_{i=1}^n w_i x_i \d_i$.
Then $\gr^V (I) \cap \K[s]\neq 0$ is a principal ideal in $\K[s]$.
Its monic generator is called the \emph{global $b$-function of $I$ with respect to the weight $w$}.
\end{defi}

Although Theorem \ref{mainresult} can not be applied in this setting, since
$s = \sum_i w_i x_i \partial_i$ does not belong to the center of the algebra,
a similar result still holds, due to the properties of the $V$-filtration,
see Proposition \ref{propInitial} below. Also Corollaries \ref{cor1}, \ref{corAlg2}
and \ref{corAlg3} can be established using initial parts instead of annihilators.

\begin{prop}\label{propInitial}
$\big(\, \gr^V(I) + \gr^V(D) \langle q(s) \rangle\, \big)\cap \K[s] = \gr^V(I)\cap\K[s] +
\K[s] \langle q(s) \rangle$.
\end{prop}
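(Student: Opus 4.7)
The plan is to mirror the proof of Theorem~\ref{mainresult}~(1), replacing the centrality hypothesis by a reduction to $V$-homogeneous components of $V$-degree $0$. The key preliminary observation is that $s = \sum_i w_i x_i \d_i$ is $V$-homogeneous of degree $0$: direct computations give $[s, x_j] = w_j x_j$ and $[s, \d_j] = -w_j \d_j$, from which one obtains $sA = A(s-m)$, and hence $r(s) A = A\, r(s-m)$ for any $r(s) \in \K[s]$, whenever $A \in \gr^V(D)$ is $V$-homogeneous of degree $m$. In particular, every $V$-homogeneous operator of degree $0$ commutes with all of $\K[s]$, which is the effective substitute for centrality.

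The inclusion $\supseteq$ is immediate. For the reverse inclusion, take $h(s) \in (\gr^V(I) + \gr^V(D) q(s)) \cap \K[s]$ and write $h(s) = P + Q q(s)$ with $P \in \gr^V(I)$ and $Q \in \gr^V(D)$. Decomposing $P = \sum_m P_m$ and $Q = \sum_m Q_m$ into $V$-homogeneous components, the graded nature of $\gr^V(I)$ ensures $P_m \in \gr^V(I)$ for each $m$; since $q(s)$ has $V$-degree $0$, each $Q_m q(s)$ has $V$-degree $m$. Projecting the identity $h(s) = \sum_m (P_m + Q_m q(s))$ onto the $V$-degree-$0$ summand yields
\[
h(s) = P_0 + Q_0 q(s), \qquad P_0 \in \gr^V(I), \qquad Q_0 \text{ of } V\text{-degree } 0.
\]

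Now the argument of Theorem~\ref{mainresult}~(1) carries over almost verbatim. Let $b(s)$ be the monic generator of the (nonzero) ideal $\gr^V(I) \cap \K[s]$, set $d(s) := \gcd(b(s), q(s))$, and write $b(s) = d(s) b_1(s)$, $q(s) = d(s) q_1(s)$, so that $b_1(s) q(s) = q_1(s) b(s)$ in $\K[s]$. Left-multiplying $h(s) = P_0 + Q_0 q(s)$ by $b_1(s)$ and using the commutation $b_1(s) Q_0 = Q_0 b_1(s)$ from the preliminary step, we obtain
\[
b_1(s) h(s) = b_1(s) P_0 + Q_0 b_1(s) q(s) = b_1(s) P_0 + Q_0 q_1(s) b(s) \in \gr^V(I).
\]
Hence $b_1(s) h(s) \in \gr^V(I) \cap \K[s] = \langle d(s) b_1(s) \rangle$, which forces $h(s) \in \langle d(s) \rangle$; by B\'ezout, $d(s) \in \gr^V(I) \cap \K[s] + \K[s] q(s)$, completing the argument.

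The main obstacle is the homogeneity reduction: one must keep track of the fact that $q(s)$ (and hence $h(s)$) is $V$-homogeneous of degree $0$, and ensure that the commutation $b_1(s) Q_0 = Q_0 b_1(s)$ is invoked only after $Q$ has been replaced by its $V$-degree-$0$ component $Q_0$. Once this is in place, the failure of $s$ to be central in $\gr^V(D)$ becomes harmless, and the remaining manipulations reproduce those of Theorem~\ref{mainresult}.
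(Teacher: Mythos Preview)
Your proposal is correct and follows essentially the same route as the paper: project onto the $V$-degree-$0$ component using that $\gr^V(I)$ is graded and $q(s)$ has degree $0$, then invoke the argument of Theorem~\ref{mainresult}~(1) using that degree-$0$ operators commute with $\K[s]$. The paper's proof is terser (it simply says ``proceed as in the proof of Theorem~\ref{mainresult}~(1)'' after the homogeneity reduction), but your explicit verification of $sA = A(s-m)$ and the spelled-out gcd argument add nothing new and introduce no error.
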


\begin{proof}
Actually it is an easy consequence of being treated with $V$-homogeneous ideals. Consider
$h(s) = Q + R\cdot q(s)$, where $Q\in \gr^V(I)$ and $R\in \gr^V(D)$. Taking $V$-homogeneous parts in the above expression, one finds $Q_0 \in \gr^V(I)$ and $R_0\in \gr^V(D)$ of
degree $0$ such that
$
  h(s) = Q_0 + R_0\cdot q(s).
$
Now, since $q(s)$ commutes with $Q_0$, one can proceed as in the proof of Theorem
\ref{mainresult} (1).
\end{proof}

Many algorithms in the realm of $D$-modules are based on the computation of
such $b$-functions. For some applications like integration and restriction,
only the maximal and the minimal integral roots have to be computed.

However the above proposition can not be used to find the set of all integral roots, since neither upper nor lower bound is known in advance. For instance, N.~Takayama used the following easy example to show the general unboundness: $I=\langle x\d_1 + k\rangle$, $k\in\mathbb{Z}$ is $D_1$-holonomic and $\ini_{(-1,1)}(I)\cap \C[s] = \langle s+k\rangle$ with $s=t\d_t$.

\section{Computing $b$-functions via Upper Bounds}

As different possible ways to find upper bounds, we present embedded resolutions,
topologically equivalent singularities and A'Campo's formula.
Depending on the context local or global version of our algorithm is used.

\subsection{Embedded resolutions}
\label{upperBound}

In this part of the paper we will work over
the field $\C$ of the complex numbers. However, in actual computation
we can assume that the ground field is generated by a finite number of
(algebraic or transcendental) elements over the field $\mathbb{Q}$ of
the rational numbers and that the algebraic relations among these
elements are specified. 

\begin{defi}
Let $h:Y \to \C^n$ be a proper birational morphism. We say that $h$
is a {\em global embedded resolution} of the hypersurface
defined by a polynomial $f\in \C[x]$, $X=V(f)$, if the following conditions are satisfied:
\begin{enumerate}
\item $Y$ is a non-singular variety.
\item $h: Y\setminus h^{-1}(X) \to \C^n\setminus X$ is an isomorphism.
\item $h^{-1}(X)$ is a normal crossing divisor.
\end{enumerate}
\end{defi}

Since $h^{-1}(X)$ is a normal crossing divisor, the morphism
$F=f\circ h : Y \to \C$ is locally given by a monomial.
Hence, we can define the $b$-function of $F$ as the least common
multiple of the local ones. If $F$ is locally given by the monomial
$x^\alpha = x_1^{\alpha_1}\cdots x_n^{\alpha_n}$ at the point $p$,
then
$$
  b_{F,p}(s) = \prod_{i=1}^{\alpha_1} \left(s+\frac{i}{\alpha_1}\right)
  \cdots \prod_{i=1}^{\alpha_n} \left(s+\frac{i}{\alpha_n}\right) =
  \prod_{1\leq i_j\leq \alpha_j}\ \prod_{1\leq k\leq n}
  \left(s+\frac{i_k}{s_k}\right).
$$

The following is the global version of the classical result by Kashiwara
\cite{Kashiwara76/77}. The upper bound statement is due to
Varchenko (\cite{Varchenko81}) and Saito (\cite{Saito93, Saito94}).

\begin{theo}
\label{VS}
For $f\in R_n$, there exists an integer $k$ such that $b_f(s)$ is a divisor of the product $b_F(s) b_F(s+1) \cdots b_F(s+k)$. Moreover $0 \leq k \leq n-1$. 
\end{theo}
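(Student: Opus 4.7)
The strategy is to transfer a Bernstein-type equation from $Y$, where $F=f\circ h$ is locally monomial, back down to $X=\C^n$ via the proper birational morphism $h$. The existential part is essentially Kashiwara's original argument; the sharpening $k\le n-1$ is the Hodge-theoretic refinement and will be the main obstacle.

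First I would exploit the normal crossing condition. Locally on $Y$ we have $F = u\cdot y_1^{a_1}\cdots y_r^{a_r}$ with $u$ a unit, and for such a monomial a direct calculation produces an explicit operator $Q(s)\in D_Y[s]$ satisfying $Q(s)\,F\cdot F^s = b_F(s)\cdot F^s$, with $b_F(s)=\prod_j\prod_{i=1}^{a_j}(s+i/a_j)$ as in the statement. Gluing these local equations (or, equivalently, applying Bernstein's theorem chart by chart and taking lcms) gives a global Bernstein identity on $Y$.

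Second, I would iterate this identity. For every integer $k\ge 0$ repeated application produces an operator $Q_k(s)\in D_Y[s]$ with
\[
Q_k(s)\,F^{k+1}\cdot F^s \;=\; b_F(s)\,b_F(s+1)\cdots b_F(s+k)\cdot F^s.
\]
Third, I would descend to $X$. Because $h$ restricts to an isomorphism between $Y\setminus h^{-1}(V(f))$ and $\C^n\setminus V(f)$ and because $F=f\circ h$, pushing this identity forward along $h$ on the open complement yields an identity
\[
\widetilde Q_k(s)\,f^{k+1}\cdot f^s \;=\; \prod_{j=0}^{k} b_F(s+j)\cdot f^s
\]
in $R_n[s,\tfrac1f]\cdot f^s$, where a priori $\widetilde Q_k(s)$ has coefficients with poles only along the exceptional divisor of $h$. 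The key point is that after multiplying by a sufficiently high power of $f$ these poles are cancelled, so $\widetilde Q_k(s)\,f^{k+1}$ lies in $D_n[s]$ for $k$ large enough. Comparing with the defining equation (\ref{bernstein}) for $b_f(s)$ then forces $b_f(s)$ to divide $\prod_{j=0}^{k} b_F(s+j)$, giving the existence part.

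The hard part is the uniform bound $k\le n-1$, which is not visible from the naive pole-cancellation estimate above. Here I would follow the Varchenko--Saito approach: reinterpret $b_f(s)$ as the minimal polynomial of $-s = \d_t t$ acting on the graded pieces of the Kashiwara--Malgrange $V$-filtration along $t-f=0$ of the $D$-module generated by $f^s$, view this $D$-module as a subquotient of the proper direct image $h_+\mathcal N$ of the analogous object $\mathcal N=\mathcal O_Y[s,\tfrac1F]F^s$ on $Y$, and control the number of integer shifts produced by this direct image via Saito's strictness of the Hodge filtration. The cohomological degrees occurring in $\mathbf R h_*$ range over $0,\ldots, n-1$ since the fibres of $h$ have dimension at most $n-1$, and each such degree contributes at most a unit shift of the $V$-filtration indices. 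This is precisely what caps the number of factors $b_F(s+j)$ needed at $n$, i.e.\ $k\le n-1$; making this bound rigorous is the technical heart of the argument and is where the Hodge module formalism is essential.
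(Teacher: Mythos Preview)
Your proposal is correct in spirit and in fact supplies more content than the paper's own proof, which proceeds by reduction to the local case and citation: the paper observes that $h$ induces a local embedded resolution at every $p\in V(f)$, invokes Kashiwara's theorem \cite{Kashiwara76/77} pointwise to obtain $b_{f,p}(s)\mid \prod_{j=0}^{k_p} b_F(s+j)$, and then appeals to Theorem~\ref{localBS} ($b_f=\lcm_p b_{f,p}$) to globalize; the bound $k\le n-1$ is simply referred to \cite{Varchenko81,Saito93,Saito94}.

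Your route differs in that you work globally from the outset and attempt to outline the mechanism behind the cited results. The weakest link is your step~3: a differential operator on $Y$ does not push forward to one on $X$ in any naive sense, and the phrase ``poles only along the exceptional divisor'' is not quite the right picture. Kashiwara's actual argument passes through the \emph{direct image of $D$-modules}: properness of $h$ ensures that $\int_h\!\big(\mathcal O_Y[s,\tfrac1F]F^s\big)$ is coherent (indeed holonomic) over $\mathcal D_X[s]$, so the $\mathcal D_X[s]$-submodule generated by $f^s$ is finitely generated, and the iterated Bernstein identity on $Y$ descends to it, forcing the divisibility for some finite $k$. Your pole-cancellation heuristic is a shadow of this coherence statement but would not stand on its own. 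Your sketch of the Hodge-theoretic bound in step~4 is accurate at the level of strategy; that part is indeed as hard as you indicate, and the paper makes no attempt to reprove it either.
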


\begin{proof}
Since $h$ is a global embedded resolution of $X=V(f)$, $h$ induces
a local embedded resolution of the germ $(X,p)$ at every point
$p\in X$. Now, the 
existence of $k\geq 0$ with the divisibility property follows from the theorem by Kashiwara \cite{Kashiwara76/77} and from the fact that the global $b$-function is the least common multiple of
the local ones, see Theorem \ref{localBS}. The proof for the upper bound can be found in the
references above.
\end{proof}

This theorem allows one to find upper bounds also for the global case.
Let us see an example to show how one can apply the algorithm
{\tt checkRoot} in order to compute the $b$-function.

%

\begin{ex}\label{exCheckRoot}
Let $f=(x z+y)(x^4+y^5+x y^4)\in\mathbb{Q}[x,y,z]$ and
$B_1(s) = b_{x^5}(s) b_{y^{18}}(s) b_{z^{24}}(s)$.
Since every root of $b_f(-s)$ belongs to the real
interval $(0,3)$, see Theorem \ref{VS},
computing an embedded resolution of the singularity
and using Kashiwara's result \cite{Kashiwara76/77},
we obtain that $B(s)=B_1(s)B_1(s+1)B_1(s+2)$ is an upper
bound for $b_f(s)$. Once we know a system of generators
of the $\ann_{D_n[s]} f^s$, checking whether
each root of the upper bound is a root of the Bernstein-Sato
polynomial was easy. It took less than 5 seconds except
for those ones which appear in the table below.
We also observe that when a candidate is not a root indeed, 
the computation is very fast.
To the best of our knowledge,
this example (first appeared in \cite{CU05}) is intractable
by any computer algebra system.
\begin{equation*}
\begin{split}
  b_f(s) =\ &(s+1)^2 (s+17/24) (s+5/4) (s+11/24) (s+5/8) (s+31/24) (s+13/24)\\
  & (s+13/12) (s+7/12) (s+23/24) (s+5/12) (s+3/8) (s+11/12) (s+9/8)\\
  & (s+7/8) (s+19/24) (s+3/4) (s+29/24) (s+25/24)
\end{split}
\end{equation*}

The running time is given in the format \textit{minutes:seconds}.

\begin{center}
\hspace{0.5cm}\begin{tabular}{|c|c|c|}
\hline
Root of $B(-s)$ & Running time & Root of $b_f(-s)$ ?\\
\hline
$5/4$ & 29:16 & Yes\\
\hline
$31/24$ & 26:16 & Yes\\
\hline
$29/24$ & 7:51 & Yes\\
\hline
$9/8$ & 0:35 & Yes\\
\hline
\end{tabular}
\end{center}
\end{ex}

Let us give a brief indication for computing a global embedded resolution of $f$.
Consider $\pi: \widehat{\C}^3 \to \C^3$, the blow-up of $\C^3$ with center
in $Z=\{ x=y=0 \}$. Denote $V_1 = V(x z+y)$ and $V_2=V(x^4 + y^5 + x y^4)$ the
two components of $V(f)$ and $\widehat{V}_1$, $\widehat{V}_2$ their corresponding
strict transforms. The exceptional divisor $E$ has multiplicity $5$ and 
$\widehat{V}_1$ and $\widehat{V}_2$ do not meet. Moreover $\widehat{V}_1$ and
$E$ intersect transversally. The local equation of $\widehat{V}_2 \cup E$ is
given by the polynomial $y^5 (x^4 + y + x y)$. Now, one can proceed as in the
case of plane curves, since the local equation involves just two variables.
Finally, we obtain seven divisors with normal crossings, see Figure
\ref{embRes}. This method can also be applied to the family
$(x z+y) g(x,y)$ under some extra conditions on $g(x,y)$.

\begin{figure}[h t]\label{embRes}
\centering
\includegraphics{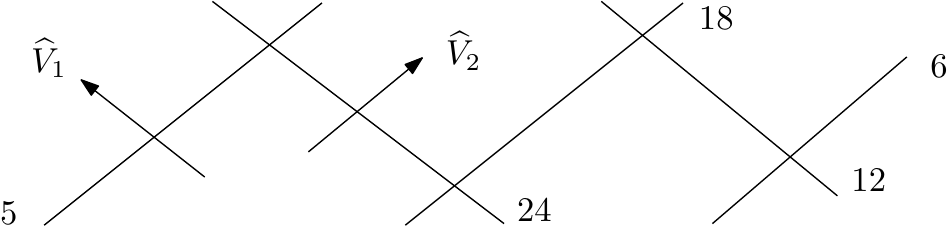}
\caption{Embedded resolution of $V((x z+y)(x^4+y^5+x y^4))$}
\end{figure}

\begin{remark}
To the best of our knowledge, resolution of singularities has 
never been used before for computing
Bernstein-Sato polynomials in an algorithmic way.
Recall that an embedded resolution can be computed algorithmically
in any dimension and for any affine algebraic variety \cite{BEV05}.
\end{remark}

One can find upper bounds for the case of hyperplane arrangements by computing an embedded resolution. This allows one among other to test formulas for \BS polynomials of non-generic arrangements. A formula for the \BS polynomial of  a generic hyperplane arrangement was given by Walther in \cite{Walther05}.


\subsection{Topologically equivalent singularities}
\label{topEquiSing}

Let $f, g$ be two topologically equivalent singularities and assume that
$b_f(s)$ is known. Since the set $E_f = \{ e^{2\pi i \alpha} \mid b_{f,0}(\alpha)=0 \}$
is a topological invariant of the singularity $\{ f=0 \}$ at the origin \cite{Malgrange75, Malgrange83} and every root belongs to $(-n,0)$ (Theorem \ref{VS}), one can find an
upper bound for $b_g(s)$ from the roots of $b_f(s)$ and use our algorithms for computing $b_g(s)$. The upper bound is constructed as $\prod_{\beta\in E} (s-\beta)$, where
$E = \{ \alpha + k \mid \alpha\in E_f, k \in \Z, \alpha+k \in (-n,0)\}$.



In general it is complicated to check, whether two singularities are equivalent. However, there are some special families for which this can be done. This is the case of quasi-ordinary singularities, see e.g.~\cite{Lipman88}. Let us see an example of a non-isolated one.

\begin{ex}
Let $f =  z^4 + x^6 y^5$ and $g = f + x^5 y^4 z$. Since the corresponding discriminants with respect to $z$ are normal crossing divisors, the associated germs at the origin define quasi-ordinary singularities. Moreover the characteristic exponents are in both cases the same and hence they are topologically equivalent, see e.g.~\cite{Lipman88}.

The Bernstein-Sato polynomial of $f$ at the origin has $27$ roots, all of them with multiplicity one except for $\alpha = -1$ which has multiplicity two. Here is the list in positive format.
\begin{small}
$$
  1, \frac{5}{6}, \frac{9}{10}, \frac{4}{3}, \frac{13}{10}, \frac{2}{3}, \frac{3}{4},
  \frac{19}{20}, \frac{5}{12}, \frac{11}{10}, \frac{17}{12}, \frac{17}{20}, \frac{11}{12},
  \frac{7}{10}, \fbox{$\displaystyle\frac{19}{12}$}, \frac{13}{20}, \frac{27}{20},
  \frac{7}{6}, \frac{21}{20}, \frac{9}{20}, \frac{13}{12}, \frac{5}{4}, {\bf \frac{3}{2}},
  \frac{7}{12}, {\bf \frac{31}{20}}, \fbox{$\displaystyle\frac{7}{4}$}, \frac{23}{20}
$$
\end{small}

The exponential of the above set has $24$ elements. Each of them gives three candidates for~$b_{g,0}(-s)$ except for $-\alpha = 1$ which gives just two. For instance $-\alpha = 1/2$ gives the following three possible roots.
$$
\frac{1}{2} \rightarrow \Big\{ \frac{1}{2}, \frac{3}{2}, \frac{5}{2} \Big\}
$$
There are $71$ possible roots in total. Note that using this approach we do not have any information about the multiplicities. Finally one obtains the roots for $b_{g,0}(-s)$.
\begin{small}
$$
  1, \frac{5}{6}, \frac{9}{10}, \frac{4}{3}, \frac{13}{10}, \frac{2}{3}, \frac{3}{4},
  \frac{19}{20}, \frac{5}{12}, \frac{11}{10}, \frac{17}{12}, \frac{17}{20}, \frac{11}{12},
  \frac{7}{10},\quad \frac{13}{20}, \frac{27}{20}, \frac{7}{6}, \frac{21}{20},
  \frac{9}{20}, \frac{13}{12}, \frac{5}{4}, {\bf \frac{1}{2}}, \frac{7}{12},
  {\bf \frac{11}{20}}, \quad \frac{23}{20}
$$
\end{small}

Observe that the Bernstein polynomials are very similar. The roots of $b_{f,0}(-s)$ marked with a box have disappeared in $b_{g,0}(-s)$ and the ones in bold $3/2$, $31/20$ have become $1/2$,~$11/20$. In the table we put the information on timings in minutes:seconds format. Here, \texttt{bfct} (computing Bernstein-Sato polynomial) and \texttt{Sannfs} (computing $\ann_{D[s]} (f^s)$) stand for corresponding procedures from the library \texttt{dmod.lib}. Also there are the minimal and the maximal time, spent for checking single roots.
\begin{center}
\begin{tabular}{|c|c|c|c|c||c|}
\hline
$\texttt{bfct}(f)$ & $\texttt{Sannfs}(g)$ & check all roots & $-\alpha = 27/20$ & $-\alpha = 1/2$ & $\texttt{bfct}(g)$\\
\hline
0:13 & 0:45 & 2:37 & 0:04 & 0:02 & 33:03\\
\hline
\end{tabular}
\end{center}

\end{ex}

\subsection{A'Campo's formula}
\label{acampo}

The Jordan form of the local Picard-Lefschetz monodromy of superisolated surface singularities was calculated by Artal-Bartolo in \cite{Artal94}. The main step in this computation was to present explicitly an embedded resolution for this family and study
the mixed Hodge structure of the Milnor fibration.

Since every root of the Bernstein-Sato polynomial belongs to the interval $(-n,0)$ (Theorem \ref{VS}) and the characteristic polynomial is a topological invariant, using the results by Malgrange \cite{Malgrange75, Malgrange83}, one can
eventually provide an upper bound for the $b$-function. Let us see an example
that was not feasible even with the powerful specialized implementation by Schulze
\cite{Gmssinglib}.

\begin{ex}
Let $V$ be the superisolated singularity defined by $f = z^6 + (x^4 z + y^5 + x y^4)$.
The characteristic polynomial is
$$
  \Delta(t) = \frac{(t^5-1)(t^6-1)(t^{120}-1)}{(t-1)(t^{30}-1)(t^{24}-1)}.
$$

This polynomial has $76$ different roots and thus we know in advance that
the Bernstein-Sato polynomial (resp. the reduced one) has at least $77$ (resp. $76$) different
roots. Using the above results in $230$ possible candidates. Only $77$ of them are roots of the $b$-function indeed, all of them with multiplicity one. The total running time was $1$ hour and $32$ minutes.
$$
\begin{array}{c}
1, \frac{27}{40}, \frac{101}{120}, \frac{41}{60}, \frac{17}{20}, \frac{83}{120},
\frac{103}{120}, \frac{43}{60}, \frac{53}{60}, \frac{29}{40}, \frac{107}{120},
\frac{23}{40}, \frac{89}{120}, \frac{109}{120}, \frac{71}{120}, \frac{91}{120},
\frac{37}{40}, \frac{73}{120}, \frac{31}{40},
\\[0.2cm]
\frac{113}{120}, \frac{37}{60},\frac{47}{60},\frac{19}{20}, \frac{77}{120}, \frac{97}{120}, \frac{39}{40}, \frac{13}{20}, \frac{49}{60},
\frac{59}{60}, \frac{79}{120}, \frac{33}{40}, \frac{119}{120}, \frac{3}{5},
\frac{4}{5}, \frac{121}{120}, \frac{47}{40}, \frac{161}{120}, \frac{181}{120},
\frac{61}{60},
\\[0.2cm]
\frac{71}{60}, \frac{27}{20}, \frac{91}{60} \frac{41}{40},\frac{143}{120}, \frac{163}{120}, \frac{61}{40}, \frac{21}{20}, \frac{73}{60}, \frac{83}{60}, \frac{31}{20}, \frac{127}{120}, \frac{49}{40},
\frac{167}{120}, \frac{187}{120}, \frac{43}{40}, \frac{149}{120}, \frac{169}{120},
\frac{131}{120},
\\[0.2cm]
\frac{151}{120}, \frac{57}{40}, \frac{133}{120}, \frac{51}{40}, \frac{173}{120}, \frac{67}{60}, \frac{77}{60}, \frac{29}{20}, \frac{137}{120}, \frac{157}{120}, \frac{59}{40}, \frac{23}{20}, \frac{79}{60}, \frac{89}{60}, \frac{139}{120}, \frac{53}{40},\frac{179}{120}, \frac{6}{5}, \frac{7}{5}
\end{array}
$$
\end{ex}

\begin{remark}
Spectral numbers are defined using the semi-simple part of the action of the monodromy
on the mixed Hodge structure on the cohomology of the Milnor fiber \cite{Steenbrink77},
\cite{Varchenko81}. In \cite[Th. 3.3]{GH07}, \cite[Th. 0.7]{Saito93} it is proved,
that some roots of the Bernstein-Sato polynomial of a germ with an isolated critical
point at the origin, can be obtained from the knowledge of the spectral numbers of
the germ. 
Since spectral numbers do not change under $\mu$-constant deformations, this also
gives a set of common roots of the Bernstein-Sato polynomials, associated with the members
of a $\mu$-constant deformation of a germ. Therefore, they provide a lower bound for
$b_f(s)$, as well as an upper bound.
\end{remark}


\section{Integral Roots of $b$-functions}

For several applications only integral roots of the $b$-function are needed,
e.g.~\cite{SST00}. We present here problems related to the so-called Logarithmic Comparison
Theorem and Intersection Homology $D$-module. Depending on the context local or global
version of our algorithm is used.

\subsection{Upper bounds from different ideals}
\label{diffIdeals}

Consider a left ideal $I\subseteq \ann_{D[s]} f^s$. Then $I + \langle f \rangle \subseteq \ann_{D[s]} f^s + \langle f \rangle \subsetneq D[s]$, that is the former is a proper ideal. 
Then define the \textbf{relative $b$-polynomial} $b^{I}_f(s)\in\K[s]$ to be the monic generator of $\big(I + \langle f \rangle\big) \cap \K[s]$, then $b_f(s) \mid b^{I}_f(s)$. Note, that quite often $b^{I}_f(s)=0$. But if $b^{I}_f(s)\not=0$, it
gives us an upper bound for $b_f(s)$. In particular, one can take $I$, giving rise to
a holonomic $D[s]$-module, that is $\mbox{GK.dim} D[s]/I = \mbox{GK.dim} D[s]/\ann_{D[s]} f^s = n+1$. 

Since $(s+1) \mid b_f(s)\mid b^{I}_f(s)$, one can consider the \textbf{reduced relative $b$-polynomial} $\widetilde{b^{I}_f(s)}\in\K[s]$ to be the monic generator of $\big(I + \langle f, \frac{\d f}{\d x_1}, \ldots, \frac{\d f}{\d x_n}\rangle\big) \cap \K[s]$.


A prominent example of $I$ as above is the logarithmic annihilator. 
Let $I = \ann^{(1)}_{D[s]}(f^s)$ be the ideal in $D[s]$ generated by the operators $P(s)\in \ann_{D[s]}$ of total degree at most one in $\d_i$.
Let us define $b_f^{(1)}(s) := b^{I}(f^s)_f(s) = \big(\ann^{(1)}_{D[s]}(f^s) + D[s] \langle f \rangle \,\big) \cap \K[s]$. The reduced $\widetilde{b_f^{(1)}}(s)$ is useful as well.


\subsection{Minimal integral root of $b_f(s)$ and the logarithmic comparison problem}
\label{minRoot}

Since every root of $b_f(s)$ belongs to the real interval $(-n,0)$, 
integral roots are bounded and therefore the whole
\BS 
polynomial is not needed. Let us see an example that could not be
treated before with the classical methods.

\begin{ex}\label{bigEx}
Let $A$ be the matrix given by
$$
A = \left(\begin{array}{c c c c}
x_1 & x_2 & x_3 & x_4\\
x_5 & x_6 & x_7 & x_8\\
x_9 & x_{10} & x_{11} & x_{12}
\end{array}\right).
$$
Let us denote by $\Delta_i$, $i=1,2,3,4$, the determinant of the minor
resulting from deleting the $i$-th column of A, and consider
$f= \Delta_1 \Delta_2 \Delta_3 \Delta_4$. The polynomial $f$
defines a non-isolated hypersurface in $\C^{12}$. 
Following Theorem \ref{VS}, 
the set of all possible integral roots of $b_f(-s)$ is $\{11,10,9,8,7,6,5,4,3,2,1\}$.





Using the algorithm {\tt checkRoot} with the logarithmic annihilator, see Section \ref{diffIdeals} above, instead of the classical one, we have proved that the only integral root of $b_f^{(1)}(s)$ in $(-12,0)$ is $-1$.
Hence $-1$ is the minimal integral root of $b_f(s)$.
The following is the timing information of the whole procedure.
Of course, $-1$ is always the root, but it is interesting to compare the timings of confirming this fact.
\begin{center}
\begin{tabular}{|c|c|c|}
\hline
Possible integral roots & 1 & 2 ... 11\\
\hline
Root of $b_f^{(1)}(s)$ ? & Yes & No\\
\hline
Running time & 3:01:12 & $\sim$ 7:50\\
\hline
\end{tabular}
\end{center}
\end{ex}

This example was suggested by F.~Castro-Jim\'enez and
J.-M.~Ucha for testing the Logarithmic Comparison Theorem, see e.~g. \cite{Torrelli07}.

The use of logarithmic annihilator allowed to reduce the computation time.
However, for $f$ from this example it is known, that $\ann_{D_n[s]}(f^s) = \ann^{(1)}_{D_n[s]}(f^s)$ and this fact together with some homogeneous properties were used to compute other roots of $b_f(s)$, see Example~\ref{continueEx} below.

\subsubsection{Quasi-homogeneous polynomials}
Assume $F\in R_n$ is a $w$-quasi-homogeneous polynomial with $w_i\neq 0$, that is,
there are numbers $w_1,\ldots,w_n$ such that with $\xi = \sum_{i=1}^n w_i x_i \partial_i$ one has $F = \xi (F)$.
Take $c\in \K^{*}$ and let us denote $f = F_{|x_k=1}$ for some fixed $k$.
We are interested in studying the relationship between the Bernstein-Sato polynomials
of $f$ and $F$. The result has been obtained working out directly with the functional
equation.


\begin{prop}\label{propHomog}
Let $F\in R_n$ be a quasi-homogeneous polynomial with respect to the weight vector
$w = (w_1,\ldots,w_n)$. Assume $w_k\neq 0$ for some
$k\in\{1,\ldots,n\}$ and define $f$ to be the polynomial, resulting from making the substitution $x_k=c\in \K^{*}$ in $F$.
Then $b_f(s)$ divides~$b_F(s)$.
\end{prop}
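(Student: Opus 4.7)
The plan is to start from a Bernstein-Sato functional equation for $F$, exploit quasi-homogeneity to rewrite the Bernstein operator so that it no longer contains $\partial_k$ (at the cost of introducing denominators in $x_k$), and then specialize $x_k\mapsto c$ to produce a (possibly non-minimal) functional equation for $f$ whose Bernstein polynomial is $b_F(s)$. Minimality of $b_f(s)$ will then yield the divisibility.

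Fix $P(s)\in D_n[s]$ with $P(s)F\cdot F^s=b_F(s)F^s$ in $M_F:=R_n[s,F^{-1}]F^s$. The hypothesis $\xi F=F$ gives $\xi F^{s+1}=(s+1)F^{s+1}$, so the element
$$
\xi-(s+1)\ =\ w_k x_k \partial_k \,+\, \sum_{i\neq k} w_i x_i \partial_i \,-\,(s+1)
$$
annihilates $F^{s+1}$. Because $w_k\neq 0$ and $c\in\K^{*}$, I pass to the Ore localization $D_n[s,x_k^{-1}]$ acting on $M_F[x_k^{-1}]=R_n[s,F^{-1},x_k^{-1}]F^s$; there the relation above can be solved for $\partial_k$:
$$
\partial_k F^{s+1} \ =\ \frac{1}{w_k x_k}\Bigl((s+1)-\sum_{i\neq k} w_i x_i \partial_i\Bigr)F^{s+1}.
$$
A straightforward induction on the degree of $P(s)$ in $\partial_k$, using this identity together with the commutator $[\partial_k,x_k^{-1}]=-x_k^{-2}$, produces an operator $R(s)$ in the $\partial_k$-free subring
$$
A_k \ :=\ \K\langle x_i,\partial_i \mid i\neq k\rangle[s]\,\otimes_\K\,\K[x_k,x_k^{-1}]
$$
such that $P(s)F^{s+1}=R(s)F^{s+1}$ in $M_F[x_k^{-1}]$. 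Equivalently, $P(s)-R(s)$ lies in the left ideal generated by $\xi-(s+1)$ inside $D_n[s,x_k^{-1}]$. Consequently $R(s)F^{s+1}=b_F(s)F^s$ in $M_F[x_k^{-1}]$.

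Since $c\neq 0$, the specialization $x_k\mapsto c$ is a well-defined $\K[s]$-algebra homomorphism $R_n[s,F^{-1},x_k^{-1}]\to R_{n-1}[s,f^{-1}]$ sending $F$ to $f$, and it restricts to a ring homomorphism $A_k\to D_{n-1}[s]$ which I denote $R(s)\mapsto \bar R(s)$. The key point is that, because $R(s)$ involves no $\partial_k$, the action of $R(s)$ on an element of $R_n[s,F^{-1},x_k^{-1}]$ commutes with the specialization. Applying this specialization to the identity $R(s)F^{s+1}=b_F(s)F^s$ yields
$$
\bar R(s)\, f\cdot f^s \ =\ b_F(s)\, f^s \qquad \text{in}\ R_{n-1}[s,f^{-1}]f^s,
$$
which is a (not necessarily minimal) Bernstein-Sato functional equation for $f$. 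By minimality of $b_f(s)$ we conclude $b_f(s)\mid b_F(s)$.

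The main obstacle is the inductive rewriting step. Each use of the relation $\partial_k\equiv \frac{1}{w_k x_k}\bigl((s+1)-\sum_{i\neq k} w_i x_i \partial_i\bigr)$ modulo $\ann_{D_n[s,x_k^{-1}]}(F^{s+1})$ requires commuting $\partial_k$ past $x_k^{-1}$, and the correction terms $-x_k^{-2}$ proliferate; one has to track them carefully to verify that the induction on $\partial_k$-degree genuinely terminates and produces a $\partial_k$-free operator in $A_k$ (rather than, say, a formal series).
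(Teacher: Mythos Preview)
Your argument is correct and follows essentially the same route as the paper: exploit the Euler relation coming from quasi-homogeneity to eliminate $\partial_k$ from the Bernstein operator modulo $\ann(F^{s+1})$, then specialize $x_k\mapsto c$. The paper's only technical variation is that, instead of localizing at $x_k$, it multiplies $P(s)$ by $x_k^d$ (with $d$ the $\partial_k$-degree) so that $x_k^d P(s)$ lands in the degree-$0$ piece of the $V$-filtration along $x_k$, where every term is a $D'$-combination of powers of $x_k$ and of $x_k\partial_k$; replacing each $x_k\partial_k$ acting on $F^{s+1}$ via the Euler relation then yields a $\partial_k$-free operator $Q(s)\in D_n[s]$ in one step, which sidesteps your termination worry entirely.
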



\begin{proof}
Consider the $V$-filtration on $D_n$ given by the variable $x_k$. Let $P(s)\in D_n[s]$ a differential operator satisfying the
functional equation for $F$. There exists $d\geq 0$ such that $x_k^d P(s) \in \sum_{i\geq 0} x_k^i \cdot V_0$.
From the homogeneity of $F$ one can deduce that
$$
  x_k \partial_k \bullet F^{s+1} = \frac{1}{w_k} \Big(s+1 - \sum_{i\neq k} w_i x_i \partial_i\Big) \bullet F^{s+1}.
$$

Let $D'$ be the $(n-1)$-th Weyl algebra in the variables $x_1,\ldots,\hat{x_k},\ldots,x_n$. Thus $V_0 = D'[x_k \partial_k]$
and $x_k^d P(s) \cdot F^{s+1}$ can be written in the form $Q(s)\cdot F^{s+1}$ where the operator $\partial_k$ does not
appear in $Q(s)\in D_n[s]$. The functional equation for $F$ has been converted in the following one:
$$
  x_k^d P(s) \bullet F^{s+1} = Q(s) \bullet F^{s+1} = x_k^d\, b_F(s) \bullet F^s
$$

Now the substitution $x_k=c\in \K^{*}$ can be made and the claim follows.
\end{proof}

\begin{ex}
The Bernstein-Sato polynomials of $F=x^2 z+y^3$ and $f=F_{|z=1} = x^2+y^3$ are
\begin{eqnarray*}
b_F (s) &=& \underbrace{(s+1) \left(s+\frac{5}{6}\right) \left(s+\frac{7}{6}\right)}
\left(s+\frac{4}{3}\right) \left(s+\frac{5}{3}\right).\\
&& \hspace{1.8cm} b_f(s)
\end{eqnarray*}
From the result of Kashiwara \cite{Kashiwara76/77} one can see, blowing up the origin of $F$, that the last two factors are related to the $b$-function of $\{z^3=0\}$. This is a general fact.
\end{ex}

\begin{ex}\label{continueEx}
Now, we continue Example \ref{bigEx}.
Let $g$ be the polynomial, resulting from $f$ by substituting $x_1$, $x_2$, $x_3$, $x_4$, $x_5$,
$x_9$ with $1$. Using Proposition \ref{propHomog} several times, one can easily see that
$b_g(s)$ divides $b_f(s)$. Finally, the {\tt checkRoot} algorithm is used to obtain that
$$
  (s+1)^4 (s+1/2) (s+3/2) (s+3/4) (s+5/4)
$$
is a factor of $b_g(s)$ and therefore a factor of $b_f(s)$.
\end{ex}

\subsection{Intersection homology $\mathcal{D}$-module}
\label{interHomology}

In this part of the paper we introduce some new notation.
We refer to \cite{Torrelli09} for further details.
Let $X$ be a complex analytic manifold of dimension $n\geq 2$,
$\mathcal{O}_X$ the sheaf of holomorphic function on $X$ and
$\mathcal{D}_X$ the sheaf of differential operators with
holomorphic coefficients. At a point $x\in X$, we identify the
stalks $\mathcal{O}_{X,x}$ with the ring $\mathcal{O} =
\mathbb{C}\{x_1,\ldots,x_n\}$ of converging power series
and $\mathcal{D}_{X,x}$ with
$\mathcal{D}=\mathcal{O} \langle \partial_1,\ldots,\partial_n \rangle$.

Given a closed subspace $Y\subset X$ of pure codimension $p\geq 1$,
we denote by $H_{[Y]}^p(\mathcal{O}_X)$ the sheaf of local
algebraic cohomology with support in $Y$. Let $\mathcal{L}(Y,X)
\subset H_{[Y]}^p(\mathcal{O}_X)$ be the intersection homology
$\mathcal{D}_X$-Module of Brylinski-Kashiwara. This is the smallest
$\mathcal{D}_X$-submodule of $H_{[Y]}^p(\mathcal{O}_X)$ which
coincides with $H_{[Y]}^p(\mathcal{O}_X)$ at the generic points
of $Y$.

A natural problem is to characterize the subspaces $Y$
such that $\mathcal{L}(Y,X)$ coincides with $H_{[Y]}^p(\mathcal{O}_X)$.
Indeed, from the Riemann-Hilbert correspondence of Kashiwara-Mebkhout, 
the regular holonomic $D_X$-module $H_{[Y]}^p(\mathcal{O}_X)$ corresponds to the
perverse sheaf ${\bf C}_Y [p]$, while $\mathcal{L}(Y, X)$ corresponds to the
intersection complex $I C_Y^\bullet$. This way, the condition
$\mathcal{L}(Y, X) = H_{[Y]}^p(\mathcal{O}_X)$ is equivalent to the following one:
the real link of $Y$ at a point $x\in Y$ is a rational homology sphere.
Torrelli proved, that the following connection to local Bernstein-Sato polynomial exists.

\begin{theo}[Theorem 1.2 in \cite{Torrelli09}]
Let $Y\subset X$ be a hypersurface and $h\in \mathcal{O}_{X,x}$
a local equation of $Y$ at a point $y\in Y$. The following
conditions are equivalent:
\begin{enumerate}
\item $\mathcal{L}(Y,X)_y$ coincides with $H_{[Y]}^p
  (\mathcal{O}_X)_y$.
\item The reduced local Bernstein-Sato polynomial of $h$ has no integral root.
\end{enumerate}
\end{theo}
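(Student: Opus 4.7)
The plan is to identify $\mathcal{L}(Y,X)_y$ as an explicit cyclic $\mathcal{D}$-submodule of $H^1_{[Y]}(\mathcal{O}_X)_y$ and then translate the equality of the two modules into the algebraic question of when $[h^{-1}]$ generates the full local cohomology as a $\mathcal{D}$-module. Since $Y$ is a hypersurface with local equation $h$, there is a canonical identification $H^1_{[Y]}(\mathcal{O}_X)_y\cong \mathcal{O}_{X,y}[h^{-1}]/\mathcal{O}_{X,y}$, which as an $\mathcal{O}_{X,y}$-module is generated by the classes $[h^{-k}]$, $k\geq 1$. The first step is to establish the identification $\mathcal{L}(Y,X)_y=\mathcal{D}_y\cdot [h^{-1}]$. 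The cyclic submodule $\mathcal{D}_y\cdot [h^{-1}]$ already coincides with the full $H^1_{[Y]}(\mathcal{O}_X)$ at every smooth point of $Y$, and the uniqueness properties of the Brylinski-Kashiwara minimal extension then single it out as $\mathcal{L}(Y,X)_y$. The problem therefore reduces to deciding when $[h^{-k}]\in \mathcal{D}_y\cdot [h^{-1}]$ for every $k\geq 2$.

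For the direction $(2)\Rightarrow (1)$, I would exploit the local \BS functional equation $P(s)\cdot h\cdot h^s = b_{h,y}(s)\cdot h^s$ with $P(s)\in \mathcal{D}_y[s]$. Specializing at $s=-k$ with $k\geq 2$, whenever $b_{h,y}(-k)\neq 0$ this gives $P(-k)\cdot h^{-k+1} = b_{h,y}(-k)\cdot h^{-k}$ inside $\mathcal{O}_{X,y}[h^{-1}]$, and therefore $[h^{-k}]\in \mathcal{D}_y\cdot [h^{-k+1}]$. A straightforward induction on $k$ then shows that if no integer $-k$ with $k\geq 2$ is a root of $b_{h,y}(s)$, then $[h^{-k}]\in \mathcal{D}_y\cdot [h^{-1}]$ for all $k\geq 2$, whence $\mathcal{L}(Y,X)_y=H^1_{[Y]}(\mathcal{O}_X)_y$. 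The root $s=-1$ of $b_{h,y}$ imposes no obstruction here, because $h\cdot [h^{-1}]=[1]=0$ in $H^1_{[Y]}(\mathcal{O}_X)_y$; the relevant obstructions are therefore precisely the integer roots of the reduced polynomial $\widetilde{b_{h,y}}(s)=b_{h,y}(s)/(s+1)$, which yields $(2)\Rightarrow (1)$.

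For the converse $(1)\Rightarrow (2)$, I would pass to the $\mathcal{D}_y[s]$-module $N=\mathcal{D}_y[s]\cdot h^s$ together with its Kashiwara-Malgrange $V$-filtration along $h$, and use the standard identification of the graded pieces in negative integer degree with quotients controlled by the vanishing of $b_{h,y}(s)$ at consecutive integers. An integer root $-k$ with $k\geq 2$ of $\widetilde{b_{h,y}}(s)$ then produces a non-zero class in the graded piece associated to $h^{-k}$ which cannot lie in the image of $\mathcal{D}_y\cdot [h^{-1}]$, yielding the strict inclusion $\mathcal{L}(Y,X)_y\subsetneq H^1_{[Y]}(\mathcal{O}_X)_y$.

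The main obstacle I anticipate is this converse direction. The forward implication is essentially a clean consequence of the functional equation combined with the cyclic description of $\mathcal{L}(Y,X)_y$, but extracting an honest obstruction from an integer root of $\widetilde{b_{h,y}}$ requires the more delicate theory of the $V$-filtration and minimal extensions of holonomic modules. Turning the algebraic statement ``$b_{h,y}(s)$ vanishes at $-k$'' into the geometric statement ``$[h^{-k}]$ is not reached by $\mathcal{D}_y\cdot [h^{-1}]$'' is where the real work lies, and where Torrelli's argument (and any variant of it) must invest significant effort.
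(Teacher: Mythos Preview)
The paper does not actually prove this theorem: it is quoted from \cite{Torrelli09} and accompanied only by the remark that the proof ``is based on a natural generalization of a classical result due to Kashiwara which links the roots of the $b$-function to some generators of $\mathcal{O}[\tfrac{1}{f}]f^\alpha$, $\alpha\in\mathbb{C}$.'' So there is no detailed argument in the paper to compare against; your sketch is already far more explicit than anything the authors provide.

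That said, your outline is precisely in the spirit of the hint they give. The Kashiwara result alluded to is the statement that $\mathcal{D}_y\cdot h^{\alpha}=\mathcal{O}_{X,y}[h^{-1}]h^{\alpha}$ whenever no integer translate $\alpha-k$ ($k\geq 1$) is a root of $b_{h,y}$; your inductive use of the functional equation at $s=-k$ is exactly how one proves this, and your passage to the quotient $\mathcal{O}[h^{-1}]/\mathcal{O}$ is correct. The direction $(2)\Rightarrow(1)$ is therefore fine.

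Two points deserve a flag. First, the identification $\mathcal{L}(Y,X)_y=\mathcal{D}_y\cdot[h^{-1}]$ is itself a nontrivial theorem (essentially due to Kashiwara, with refinements by Barlet--Kashiwara and Torrelli), not a formal consequence of ``uniqueness of the minimal extension''; in Torrelli's paper this step carries real weight, so you should cite it rather than argue it informally. Second, as you yourself anticipate, the converse $(1)\Rightarrow(2)$ is where the substance lies: producing from an integer root $-k$ of $\widetilde{b_{h,y}}$ a class $[h^{-k}]\notin\mathcal{D}_y\cdot[h^{-1}]$ genuinely requires the machinery of the $V$-filtration and the characterisation of the minimal extension in terms of it. Your description of this step is accurate but remains a plan rather than a proof; that is consistent with the paper, which likewise defers the argument to \cite{Torrelli09}.
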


The proof of the theorem is based on a natural generalization
of a classical result due to Kashiwara which links the roots of
the $b$-function to some generators of $\mathcal{O}[\frac{1}{f}]
f^\alpha$, $\alpha\in\mathbb{C}$.

\begin{ex}
Let $Y$ be the affine variety in $X=\C^3$ defined by the polynomial
$f = z^7 + (x^2 z + y^3)(x^3 + y^2 z)$. The surface $Y$ has the origin
as its only singular point and thus the local $b$-function and and the global
one coincide. The only possible integral roots are $-2$ and $-1$.
Now consider $J_f$, the Jacobian ideal of $f$, cf. Remark \ref{reducedB}. Since the reduced
Bernstein-Sato polynomial is required, the ideal
$$
  \ann_{D[s]}(f^s) + D[s] \langle f, J_f, s+\alpha \rangle
$$
is used for checking rational roots, compare with Corollary \ref{cor1} (2).
We see that the above ideal is not the whole ring for $\alpha = 1$ and
hence the set of points $x\in Y$ such that $\mathcal{L}(Y,X)_x =
H_{[Y]}^p(\mathcal{O}_X)_x$ is $Y\setminus \{0\}$.

Using the implementation by Schulze \cite{Gmssinglib} (based on Gauss-Manin connection), the computation of the whole Bernstein-Sato polynomial took $1236$ seconds. While with our approach only 7 seconds were needed.
\end{ex}

\begin{remark}
Given $Y$ as above, the set of points $x\in Y$ for which the condition
$\mathcal{L}(Y,X)_x = H_{[Y]}^p(\mathcal{O}_X)_x$
is satisfied, defines an open set in $Y$ that can be effectively computed with the
stratification associated with the integral roots of the reduced
local $b$-functions, see the sequence of varieties (\ref{eqstrat}) below.
For instance, in Example \ref{exstrat2}, the open set is $V(f)\setminus V(y,z)$.
\end{remark}

\section{Stratification Associated with Local $b$-functions}\label{stratum}

From Theorem \ref{localBS}, one can find a stratification of $\C^n$ so that $b_{f,p}(s)$
is constant on each stratum. The first method for computing such stratification was suggested
by Oaku \cite{Oaku97b} (see also \cite{Oaku97a}, \cite{Oaku97c} and \cite{BO08} for further information). However, this method relies on the primary decomposition of commutative ideals. Following the ideas started in Section \ref{localcheckRoot}, we propose a new natural algorithm 
for computing such a stratification. At first, a stratification for each root of the global $b$-function is computed. Then one obtains a stratification, associated with the local $b$-function, notably without any primary ideal decomposition, see Example \ref{exstrat1} and \ref{exstrat2} below. 
We have created an experimental implementation, which was used for presented examples.  
The substitution of primary decomposition with elementary operations clearly decreases the 
total complexity of this algorithm.

This is a natural generalization of Proposition \ref{proplocalCR}.

\begin{theo}\label{stratif}
Let $\{ P_1(s), \ldots, P_k(s), f \}$ be a system of generators of $\ann_{D[s]}(f^s)
+ D[s]\langle f \rangle$ and consider the ideals $I_{\alpha,i} = \big(I:(s+\alpha)^i\, \big)
+ D[s] \langle s+\alpha \rangle$, for $\alpha$ root of $b_f(s)$ and $i=0,\ldots,m_\alpha-1$.
Then one has
$$
  m_\alpha(p) > i \ \Longleftrightarrow \ p\in V(I_{\alpha,i}\cap \C[{\bf x}]).
$$
\end{theo}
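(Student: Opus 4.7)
The plan is to combine Corollary \ref{corAlg3} (the quotient–ideal characterization of the multiplicity of $\alpha$ as a root of the global $b$-function) with the localization argument already used in Proposition \ref{proplocalCR}. The idea is that the same quotient–plus–$(s+\alpha)$ ideal, after localizing at the point $p$, controls the local multiplicity $m_\alpha(p)$, and the condition that this ideal stay proper in $D_p[s]$ is exactly that its contraction to $\C[\mathbf{x}]$ vanishes at $p$.

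First I would set up the local version of Corollary \ref{corAlg3}. Writing $I=\ann_{D[s]}(f^s)+D[s]\langle f\rangle$, one has $D_p[s]\cdot I=\ann_{D_p[s]}(f^s)+D_p[s]\langle f\rangle$ by the identity $\ann_{D_p[s]}(f^s)=D_p[s]\ann_{D[s]}(f^s)$ recalled just before Proposition \ref{proplocalCR}. Since $s+\alpha$ lies in the centre of $D[s]$ and $S=\C[\mathbf{x}]\setminus\mathfrak{m}_p$ lies in $\C[\mathbf{x}]\subset D$, localization commutes with the quotient by powers of $s+\alpha$:
\[
D_p[s]\cdot\bigl(I:(s+\alpha)^i\bigr)=\bigl(D_p[s]\cdot I\bigr):(s+\alpha)^i.
\]
Applying Corollary \ref{corAlg3} to the local Weyl algebra $D_p$ in place of $D_n$ then gives
\[
m_\alpha(p)>i\ \Longleftrightarrow\ \bigl((D_p[s]\cdot I):(s+\alpha)^i\bigr)+D_p[s]\langle s+\alpha\rangle\neq D_p[s],
\]
and by the above identity the left-hand ideal equals $D_p[s]\cdot I_{\alpha,i}$.

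Next I would convert the properness of $D_p[s]\cdot I_{\alpha,i}$ into a statement about the contraction $I_{\alpha,i}\cap\C[\mathbf{x}]$. This is exactly the step used in Proposition \ref{proplocalCR}: Lemma \ref{lem2} with $R=D[s]$ and $S=\C[\mathbf{x}]\setminus\mathfrak{m}_p$ gives
\[
D_p[s]\cdot I_{\alpha,i}\neq D_p[s]\ \Longleftrightarrow\ I_{\alpha,i}\cap\bigl(\C[\mathbf{x}]\setminus\mathfrak{m}_p\bigr)=\emptyset\ \Longleftrightarrow\ I_{\alpha,i}\cap\C[\mathbf{x}]\subseteq\mathfrak{m}_p,
\]
and the last condition is precisely $p\in V\bigl(I_{\alpha,i}\cap\C[\mathbf{x}]\bigr)$. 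Chaining the two equivalences yields the theorem.

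The main subtlety I expect is the compatibility of taking quotients by $(s+\alpha)^i$ with the localization $D[s]\leadsto D_p[s]$; this needs $s+\alpha$ to be central (so that the one-sided quotient is well-defined and behaves like a commutative saturation) and also that $\C[\mathbf{x}]\setminus\mathfrak{m}_p$ is a two-sided Ore set in $D[s]$, which was recorded in the example following Lemma \ref{lem2}. Everything else is a direct transcription of the proof of Corollary \ref{corAlg3} into the localized ring, plus one application of Lemma \ref{lem2}.
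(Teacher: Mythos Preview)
Your proposal is correct and follows essentially the same route as the paper: the paper's proof is the one-line instruction ``repeat the argument of Corollary \ref{corAlg3} and proceed as in Proposition \ref{proplocalCR}, using Lemmas \ref{lem1} and \ref{lem2} when necessary,'' and you have carried out precisely that plan. The only cosmetic difference is that the paper names Lemma \ref{lem1} explicitly (behind the identity $\ann_{D_p[s]}(f^s)=D_p[s]\ann_{D[s]}(f^s)$ and the localization/quotient compatibility), whereas you invoke its consequence directly.
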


\begin{proof}
Repeat the same argument as in Corollary \ref{corAlg3} and proceed as in the proof of
Proposition \ref{proplocalCR}, using Lemmas \ref{lem1} and \ref{lem2} when necessary. 
\end{proof}

Using the notation of the above theorem, let $V_{\alpha,i}$ be the affine variety
corresponding to the ideal $I_{\alpha,i}\cap \C[{\bf x}]$. Then
\begin{equation}\label{eqstrat}
\emptyset =: V_{\alpha,m_{\alpha}} \subset V_{\alpha,m_{\alpha}-1} \subset \cdots
\subset V_{\alpha,0} \subset V_{\alpha,-1} := \C^n,
\end{equation}
and $m_\alpha(p) = i$ if and only if $p\in V_{\alpha,i-1}\setminus V_{\alpha,i}$.
We call this sequence the {\em stratification associated with the root $\alpha$}.
Let us see two examples\footnote{The hypersurfaces for examples have been taken from {\tt http://www.freigeist.cc/gallery.html}} to show how this result can be use to compute
a stratification associated with local $b$-functions.

\begin{ex}\label{exstrat1}
Consider $f = (x^2+9/4 y^2 + z^2 - 1)^3 - x^2 z^3 - 9/80 y^2 z^3 \in \C[x,y,z]$. The global
$b$-function is
$$
  b_f(s) = (s+1)^2 (s+4/3) (s+5/3) (s+2/3).
$$

Take $V_1 = V(x^2+9/4y^2-1,z)$, $V_2 = V(x,y,z^2-1)$ and
$V_3 = V(19x^2+1,171y^2-80,z)$. Then $V_2$ (resp. $V_3$) consists of two (resp. four)
different points and $V_3\subset V_1$, $V_1\cap V_3 = \emptyset$. The singular locus
of $f$ is union of $V_1$ and $V_2$. The stratification associated
with each root of $b_f(s)$ is given by

$$
\begin{array}{r c l c r c c c l}
\alpha &=& -1, & \ & \emptyset &\subset& V_1 \subset V(f) &\subset& \C^3\, ; \\
\alpha &=& -4/3, && \emptyset &\subset& V_1\cup V_2 &\subset& \C^3\, ; \\
\alpha &=& -5/3, && \emptyset &\subset& V_2 \cup V_3 &\subset& \C^3\, ; \\
\alpha &=& -2/3, && \emptyset &\subset& V_1 &\subset& \C^3 .
\end{array}
$$

From this, one can easily find a stratification of $\C^3$ into constructible sets such that $b_{f,p}(s)$ is constant on each stratum.

$$
b_{f,p}(s) =
\begin{cases}
1 & p\in\C^3\setminus V(f),\\
s+1 & p\in V(f)\setminus (V_1\cup V_2),\\
(s+1)^2 (s+4/3) (s+2/3) & p\in V_1\setminus V_3, \\
(s+1)^2 (s+4/3) (s+5/3) (s+2/3) & p\in V_3, \\
(s+1) (s+4/3) (s+5/3) & p\in V_2.
\end{cases}
$$
\end{ex}

\begin{ex}\label{exstrat2}
Let $M$ be the surface in $\C^3$ given by the polynomial
$f = x^2 y z + x y^2 + y^3 + y^3 z - x^2 z^2$. The global $b$-function is
$$
  b_f(s) = (s+7/6)^2 (s+5/6)^2 (s+1)^2 (s+4/3) (s+5/3) (s+3/2).
$$
The singular locus of $f$ is the union of $V(x,y)$ and $V(y,z)$. Now, consider the algebraic
set $V=V(x^2+4x, x z, y, z^2+z) = \{ 0, q_1, q_2\}$. Then the stratification associated with
each root of $b_f(s)$ is given by

$$
\begin{array}{r c l c r c c c l}
\alpha &=& -7/6, -5/6, & \ & \emptyset &\subset& V(x,y,z) \subset V(x,y)&\subset& \C^3\, ; \\
\alpha &=& -1, && \emptyset &\subset& V(y,z)\subset V(f) &\subset& \C^3\, ; \\
\alpha &=& -4/3, -5/3, && \emptyset &\subset& V(x,y,z)=\{0\} &\subset& \C^3\, ; \\
\alpha &=& -3/2, && \emptyset &\subset& V=\{0,q_1,q_2\} &\subset& \C^3 .
\end{array}
$$

Finally, we obtain a stratification associated with local $b$-functions and the corresponding
univariate polynomials.

$$
b_{f,p}(s) =
\begin{cases}
1 & p\in\C^3\setminus V(f),\\
s+1 & p\in V(f)\setminus (V(x,y)\cup V(y,z)),\\
(s+1) (s+7/6) (s+5/6) & p\in V(x,y)\setminus \{0, q_1\}, \\
(s+1)^2 & p\in V(y,z)\setminus \{0,q_2\}, \\
(s+1) (s+7/6) (s+5/6) (s+3/2) & p = q_1,\\
(s+1)^2 (s+3/2) & p = q_2,\\
b_f(s) & p = 0.
\end{cases}
$$
\end{ex}

\begin{remark}
Note that one can define a stratification associated with the roots of the
local $b$-functions, that is taking no multiplicities into account.
We have observed that our algorithm is especially useful and very fast for
computing this stratification. In particular, this is the case when each root
has multiplicity one. Finally, also observe that in any case the global $b$-function
is not actually needed, if an upper bound (or just a set containing the roots of
$b_f(s)$) is used instead.
\end{remark}

\begin{remark}
We see some common properties between the factorization of a \BS polynomial
with the so-called \textit{central character decomposition} by Levandovskyy \cite{LV3}.
In particular, for $b_f(s) = \prod_{\alpha\in A} (s-\alpha)^{m_\alpha}$, where $A\subset \mathbb{Q}$ is the set of roots of $b_f(s)$,  there is an algorithm
for computing the following direct sum decomposition of the module
\[
D[s]/(\ann_{D[s]} (f^s) + \langle f \rangle) \cong \bigoplus_{\alpha\in A} D[s]/(\ann_{D[s]} (f^s) + \langle f \rangle) : J(\alpha)^{\infty},
\]
where $J(\alpha) = \langle b_f(s)/(s-\alpha)^{m_\alpha} \rangle$. We plane to investigate this topic further and provide cyclic $D[s]$-modules, corresponding
to different strata.
\end{remark}

There is a very recent paper \cite{NN10} by Nishiyama and Noro, where the authors build a stratification without using primary decomposition. The authors use initial ideals with respect to weight vectors in computations, which is a classical (cf. \cite{SST00} alternative to the methods, utilizing annihilators $\ann_{D[s]} (f^s)$. In \cite{ALM09} there is a comparison of performance of both approaches for the computation of \BS polynomials. Notably, no method is clearly superior over another. Rather there are classes of examples, where the difference is very distinct. In particular, initial-based method score better results on hyperplane arrangements, while annihilator-based methods are better at complicated singularities, which are not hyperplane arrangements. A comparison of two methods for stratification is very interesting and it is an important task for the future. However, it seems to us that the method we presented will allow more thorough analysis of the algebraic situation due to the applicability of central character decomposition. At the moment it is not clear, whether such a decomposition exists for initial ideals.

\section{Other applications}
\label{otherApps}

\subsection{Bernstein-Sato Polynomials for Varieties}
\label{bsVar}


Let $f=(f_1,\ldots,f_r)$ be an $r$-tuple in $\K[x]^r$.
Denote by $\K\langle S \rangle$ the universal enveloping algebra $U(\mathfrak{g l}_{\,r})$,
generated by the set of variables $S=(s_{i j})$, $i,j=1,\ldots,r$ subject to relations:
$$
  [s_{i j}, s_{k l}] = \delta_{j k} s_{i l} - \delta_{i l} s_{k j}.
$$
Then, we denote by $D_n\langle S \rangle := D_n \otimes_{\K} \K\langle S \rangle$.
Consider a free $\K[x,s,\frac{1}{f}]$-module of rank one generated by the formal symbol
$f^s$ and denote it by $M = \K[x,s_{11},\ldots,s_{r r},\frac{1}{f_1\cdots f_r}]\cdot f^s$,
where $f^s = f_1^{s_{11}}\cdot \ldots \cdot f_r^{s_{r r}}$. The module $M$ has a natural structure of left $D_n \langle S \rangle$-module.
Denote by $\ann_{D_n\langle S\rangle}(f^s)$ the left ideal of all elements
$P(S) \in D_n\langle S \rangle$ such that $P(S)\bullet f^s = 0$, that is 
the {\em annihilator} of $f^s$ in $D_n \langle S \rangle$.


\begin{theo}[Budur, Musta\c{t}\v{a}, Saito \cite{BMS06}]
For every $r$-tuple $f=(f_1,\ldots,f_r)\in \K[x]^r$ there exists a non-zero
polynomial in one variable $b(s)\in \K[s]$ and $r$ differential operators
$P_1(S),\ldots,P_r(S)\in D_n\langle S \rangle$ such that
\begin{equation}\label{BSvariety}
\sum_{k=1}^r P_k(S) f_k \cdot f^s = b(s_{11}+\cdots+s_{r r}) \cdot f^s \ \in \ M.
\end{equation}
\end{theo}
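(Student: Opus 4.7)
The plan is to reduce this multivariable statement to the classical Bernstein theorem stated at the beginning of this paper, by passing to an auxiliary polynomial in $n+r$ variables. I would introduce $r$ new commuting variables $y=(y_1,\ldots,y_r)$ and form
\begin{equation*}
F(x,y):=y_1 f_1(x)+\cdots+y_r f_r(x)\in\K[x,y].
\end{equation*}
Applying Bernstein's theorem to $F$ inside the Weyl algebra $D_{n+r}$ yields a non-zero $b_F(t)\in\K[t]$ and $Q(t)\in D_{n+r}[t]$ satisfying $Q(t)\,F\cdot F^t=b_F(t)\,F^t$.

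The structural bridge between the $F$-setting and the $f^s$-setting is the algebra embedding $\K\langle S\rangle\hookrightarrow D_r$ defined by $s_{ij}\mapsto y_i\partial_{y_j}$. A direct calculation gives
\begin{equation*}
[y_i\partial_{y_j},\,y_k\partial_{y_l}]=\delta_{jk}\,y_i\partial_{y_l}-\delta_{il}\,y_k\partial_{y_j},
\end{equation*}
which matches the defining relations of $\K\langle S\rangle$ exactly. Under this identification, the distinguished element $\sigma:=s_{11}+\cdots+s_{rr}$ becomes the Euler operator $\sum_i y_i\partial_{y_i}$, and since $F$ is homogeneous of degree one in $y$, one has $\sigma\cdot F^t=t\,F^t$: that is, $\sigma$ acts by the scalar $t$ on $F^t$.

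The remaining task is to transfer the functional equation from $F^t$ to $f^s$. Viewing $F^t$ formally as $\sum_{\alpha\in\N^r}\binom{t}{\alpha}\,y^\alpha f^\alpha$, each term $y^\alpha f^\alpha$ corresponds to the specialisation of $f^s$ at $s_{ii}=\alpha_i$, and the prescribed $\mathfrak{gl}_r$-action on $M$ coincides with the action of the operators $y_i\partial_{y_j}$ on these monomials. Starting from $Q(t)\,F\cdot F^t=b_F(t)F^t$, one promotes the scalar $t$ on both sides to the operator $\sigma$ (handling the noncommutation $[\sigma,F]=F$ by shifting $t\leadsto\sigma-1$ each time $\sigma$ is commuted past $F$), then expands $F=\sum_k y_k f_k$ on the left so that each factor $y_k$ can be absorbed into the image of $\K\langle S\rangle$. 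This procedure produces operators $P_k(S)\in D_n\langle S\rangle$ with $\sum_k P_k(S)\,f_k\cdot f^s=b_F(\sigma)\cdot f^s$, establishing \eqref{BSvariety} with $b(s):=b_F(s)$.

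The main obstacle is precisely this final transfer step: converting the informal correspondence $F^t\leftrightarrow f^s$ into a genuine morphism of $D_n\langle S\rangle$-modules, and verifying that after the substitution $t\leadsto\sigma$ and the re-grouping against the summands of $F$, the resulting operators genuinely lie in $D_n\langle S\rangle$ rather than in the ambient $D_{n+r}[t]$. This requires careful bookkeeping with the homogeneity of $F$ in $y$ together with a Mellin-type correspondence between $y^\alpha$ monomials and integer shifts of the $s_{ii}$, so that nothing on the right-hand side escapes into operators involving $\partial_{y_j}$ in a way not accounted for by the $y_i\partial_{y_j}=s_{ij}$.
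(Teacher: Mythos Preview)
The paper does not give its own proof of this theorem: it is quoted verbatim as a result of Budur--Musta\c{t}\v{a}--Saito \cite{BMS06} and used as a black box, with no argument supplied. So there is no ``paper's proof'' to compare your proposal against.

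That said, your strategy---pass to $F=\sum_k y_k f_k$, apply the classical Bernstein theorem to $F$, and transport the resulting functional equation back to $f^s$ via the embedding $\K\langle S\rangle\hookrightarrow D_r$, $s_{ij}\mapsto y_i\partial_{y_j}$---is precisely the route taken in \cite{BMS06}. You have also correctly located the only nontrivial point: making the correspondence $F^t\leftrightarrow f^s$ into an honest isomorphism of $D_n\langle S\rangle$-modules, so that the operators produced genuinely lie in $D_n\langle S\rangle$. Your heuristic multinomial expansion of $F^t$ is suggestive but not how the original argument proceeds; in \cite{BMS06} the transfer is carried out by exhibiting an explicit $D_n\langle S\rangle$-module isomorphism between $M$ and the degree-zero part (for the $y$-grading) of the $D_{n+r}[t]$-module $\K[x,y,t,\tfrac{1}{F}]\cdot F^t$, under which $f^s$ corresponds to $F^t$ and each $f_k\cdot f^s$ corresponds to $\partial_{y_k}\cdot F^{t+1}/(t+1)$. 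Once this identification is in place, the shift bookkeeping you describe (arising from $[\sigma,F]=F$) becomes automatic, and the functional equation for $F$ immediately yields one of the form \eqref{BSvariety}. Your sketch is on the right track but would need this module isomorphism made precise to close the gap you yourself flag.
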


The {\em Bernstein-Sato polynomial} $b_f(s)$ of
$f=(f_1,\ldots,f_r)$ is defined to be the monic polynomial of the lowest degree in the
variable $s$ satisfying the equation (\ref{BSvariety}). It can be verified that $b_f(s)$
is independent of the choice of a system of generators of $\langle f_1,\ldots,f_r\rangle$.
Then the \BS polynomials of $f$ can be computed as follows
$$
  (\ann_{D_n \langle S \rangle} (f^s) + \langle f_1,\ldots,f_r \rangle)\cap
  \K[s_{11}+\cdots+s_{r r}] = \langle b_f(s_{11}+\ldots+s_{r r})\rangle.
$$

In \cite{ALM09} an algorithm to find a system of generators of
$\ann_{D\langle S \rangle}(f^s)$ was given. Moreover, in computing
the intersection of an ideal with the univariate subalgebra an 
optimized algorithm (which uses linear algebra approach) was used.

The above formula together with Theorem \ref{mainresult} can be used to check 
rational roots of Bernstein-Sato polynomials also for affine algebraic varieties. 
Hence, following Corollary \ref{corAlg3}, a stratification associated with the local $b$-functions can be computed.

\subsection{A remark in Narv\'aez's paper}

In \cite{Narvaez08}, Narv\'aez introduces a polynomial denoted by $\beta(s)$
verifying $\beta(s) \ann_{D[s]}(f^s) \subseteq \ann_{D[s]}^{(1)}(f^s)$.
For all the examples treated in \cite{Narvaez08}, he was able to compute an operator $P'(s)\in D[s]$ such that $b_f(s)-P'(s)f \in \ann_{D[s]}^{(1)}(f^s)$.
The last example in the paper is quite involved and could not be computed by using any computer algebra system directly. An iterated process for finding approximations of involutive bases was used instead.

Indeed, for this propose the operator is not really needed, since 
$$
  b_f(s) - P(s)f\in \ann_{D[s]}^{(1)}(f^s) \ \Longleftrightarrow\
  b_f^{(1)}(s) = b_f(s)\ \Longleftrightarrow\
  b_f^{(1)}(s) \mid b_f(s),
$$
and thus after computing $b_f^{(1)}(s)$, one only has to check whether each root
of the latter polynomial is indeed a root of the $b$-function and the same with
the multiplicities.

By definition the following inclusions hold
$$
  \beta(s) \big(\ann_{D[s]}(f^s) + \langle f \rangle\,\big)\ \subset\
  \ann_{D[s]}^{(1)}(f^s) + \langle f \rangle\ \subset\
  \ann_{D[s]}(f^s) + \langle f \rangle.
$$
This implies that $b_f(s)\, |\, b_f^{(1)}(s)\, |\, \beta(s) b_f(s)$. Additionally, if $\beta(s)$ divides $b_f(s)$, then the polynomials $b^{(1)}_f(s)$ and $b_f(s)$ both have the same roots and the above condition is equivalent to $m_\alpha(b^{(1)}_f(s)) = m_\alpha(b_f(s))$ for every root $\alpha$ of $\beta(s)$.



\begin{ex}
Let $f=(x_1 x_3 + x_2)(x_1^7 - x_2^7)$ be the last example from \cite{Narvaez08}. The Bernstein-Sato polynomial and the polynomial $\beta(s)$ are respectively
\begin{eqnarray*}
b_f(s) &=& (s + 1)^3 (s + 3/4) (s + 3/8) (s + 9/8) (s + 1/4) (s + 7/8) (s + 1/2) (s + 5/8),\\
\beta(s) &=& (s + 3/4) (s + 5/8) (s + 1/2) (s + 3/8) (s+1/4).
\end{eqnarray*}

Now one only has to check that all roots of $\beta(s)$ have multiplicity $1$ as a root of $b_f^{(1)} (s)$. This can be done using Theorem \ref{mainresult} with $I = \ann^{(1)}_{D[s]} (f^s) + \langle f \rangle$. Using this approach the computations are trivial, less than $5$ seconds.
\end{ex}

\section{Conclusion and Further Work}
\label{conclusion}

As we have demonstrated, the family of \texttt{checkRoot} algorithms (implemented 
in the library \texttt{dmod.lib} of {\sc Singular}) has many useful applications in the realm of $D$-modules. 
Nowadays, it is the only method that allows one to obtain some roots of the $b$-function without computing the whole Bernstein-Sato polynomial. The latter is often infeasible despite all the
recent progress in computational $D$-module theory.

We emphasize, that presented techniques are elementary (by utilizing
the principal ideal domain of the center $\K[s]$ of $D_n[s]$) but very powerful from computational point of view. Many intractable examples and conjectures could be treated with this new method, as we have partially illustrated.
Moreover, a stratification associated with the local $b$-functions can be obtained
without primary decomposition \cite{Oaku97b} as in the very recent \cite{NN10}. 
It is very interesting to study these algorithms further and compare our approach
with the one of \cite{NN10}.

Unfortunately, these techniques cannot be generalized for Bernstein-Sato ideals, since such ideals lie in $\K[s_1,\ldots,s_m]$ for $m\geq 2$.


We have demonstrated that one can use the idea of {\tt checkRoot} for checking rational roots of $b$-function of a holonomic ideal with respect to a weight vector \cite{SST00}.
This gives an easier method for computing, among other, integral roots of such $b$-functions, if an upper bound is known in advance. In this context, it would be very interesting to have a version of Kashiwara's result for some holonomic ideals and certain weights, since many algorithms in $D$-modules theory are based on integrations and restrictions which need minimal/maximal roots.



\section*{Acknowledgements}


We would like to thank Francisco Castro-Jim\'{e}nez, Jos\'{e}-Mar\'{i}a Ucha, Gert-Martin Greuel,
Enrique Artal and Jos\'{e}-Ignacio Cogolludo for their constant support and motivation in our
work over years.

We wish to thank to Uli Walther and Daniel Andres for fruitful discussions on the topics of
this article.


The authors thank to the DFG Graduiertenkolleg "Hierarchie und Symmetrie in mathematischen Modellen" at RWTH Aachen, Germany and projects MTM2007-67908-C02-01, FQM-333 and ``E15 Grupo Consolidado Geometr\'ia (DGA)'' of Spain for the partial financial support.


\bibliography{./checkRootRef}

\begin{thebibliography}{10}

\bibitem{ALM09}
D.~Andres, V.~Levandovskyy, and J.~Mart\'in-Morales.
\newblock Principal intersection and {B}ernstein-{S}ato polynomial of an affine
  variety.
\newblock In {\em Proc. of the International Symposium on Symbolic and
  Algebraic Computation (ISSAC'09)}. ACM Press, 2009.

\bibitem{Artal94}
Enrique Artal-Bartolo.
\newblock Forme de {J}ordan de la monodromie des singularit\'es superisol\'ees
  de surfaces.
\newblock {\em Mem. Amer. Math. Soc.}, 109(525):x+84, 1994.

\bibitem{BO08}
Rouchdi Bahloul and Toshinori Oaku.
\newblock Local {B}ernstein-{S}ato ideals: algorithm and examples.
\newblock 45(1):46--59, 2010.

\bibitem{Bernstein72}
I.~N. Bernstein.
\newblock Analytic continuation of generalized functions with respect to a
  parameter.
\newblock {\em Funkcional. Anal. i Prilo\v zen.}, 6(4):26--40, 1972.

\bibitem{BEV05}
Ana~Mar{\'{\i}}a Bravo, Santiago Encinas, and Orlando Villamayor~U.
\newblock A simplified proof of desingularization and applications.
\newblock {\em Rev. Mat. Iberoamericana}, 21(2):349--458, 2005.

\bibitem{BM02}
J.~Briancon and Ph. Maisonobe.
\newblock Remarques sur l'idéal de \text{B}ernstein associé à des polynômes.
\newblock {\em Preprint no. 650, Univ. Nice Sophia-Antipolis}, 2002.

\bibitem{BMS06}
N.~Budur, M.~Musta{\c{t}}{\v{a}}, and M.~Saito.
\newblock Bernstein-{S}ato polynomials of arbitrary varieties.
\newblock {\em Compos. Math.}, 142(3):779--797, 2006.

\bibitem{CU05}
F.~J. Castro-Jim{\'e}nez and J.~M. Ucha-Enr{\'{\i}}quez.
\newblock Logarithmic comparison theorem and some {E}uler homogeneous free
  divisors.
\newblock {\em Proc. Amer. Math. Soc.}, 133(5):1417--1422 (electronic), 2005.

\bibitem{GW04}
K.~R. Goodearl and R.~B. Warfield, Jr.
\newblock {\em An introduction to noncommutative {N}oetherian rings}, volume~61
  of {\em London Mathematical Society Student Texts}.
\newblock Cambridge University Press, Cambridge, second edition, 2004.

\bibitem{Plural}
G.-M. Greuel, V.~Levandovskyy, and H.~Sch{\"o}nemann.
\newblock {\textsc{Plural}. A \textsc{Singular} 3.0 Subsystem for Computations
  with Non--commutative Polynomial Algebras. Centre for Computer Algebra,
  University of Kaiserslautern}, 2006.

\bibitem{GH07}
Andr{\'e}a~G. Guimar{\~a}es and Abramo Hefez.
\newblock Bernstein-{S}ato polynomials and spectral numbers.
\newblock {\em Ann. Inst. Fourier (Grenoble)}, 57(6):2031--2040, 2007.

\bibitem{Kashiwara76/77}
M.~Kashiwara.
\newblock {$B$}-functions and holonomic systems. {R}ationality of roots of
  {$B$}-functions.
\newblock {\em Invent. Math.}, 38(1):33--53, 1976/77.

\bibitem{LV3}
V.~Levandovskyy.
\newblock On preimages of ideals in certain non--commutative algebras.
\newblock In G.~Pfister, S.~Cojocaru, and V.~Ufnarovski, editors, {\em
  Computational Commutative and Non-Commutative Algebraic Geometry}. IOS Press,
  2005.

\bibitem{LM08}
V.~Levandovskyy and J.~Mart\'in-Morales.
\newblock Computational {$D$}-module theory with {\textsc{singular}},
  comparison with other systems and two new algorithms.
\newblock In {\em Proc. of the International Symposium on Symbolic and
  Algebraic Computation (ISSAC'08)}. ACM Press, 2008.

\bibitem{LZ07}
V.~Levandovskyy and E.~Zerz.
\newblock Obstructions to genericity in study of parametric problems in control
  theory.
\newblock In H.~Park and G.~Regensburger, editors, {\em Gr{\"o}bner Bases in
  Control Theory and Signal Processing}, volume~3 of {\em Radon Series Comp.
  Appl. Math}, pages 191--214. Walter de Gruyter \& Co., 2007.

\bibitem{Lipman88}
Joseph Lipman.
\newblock Topological invariants of quasi-ordinary singularities.
\newblock {\em Mem. Amer. Math. Soc.}, 74(388):1--107, 1988.

\bibitem{Malgrange75}
B.~Malgrange.
\newblock Le polyn\^ome de {B}ernstein d'une singularit\'e isol\'ee.
\newblock In {\em Fourier integral operators and partial differential equations
  (Colloq. Internat., Univ. Nice, Nice, 1974)}, pages 98--119. Lecture Notes in
  Math., Vol. 459. Springer, Berlin, 1975.

\bibitem{Malgrange83}
B.~Malgrange.
\newblock Polyn\^omes de {B}ernstein-{S}ato et cohomologie \'evanescente.
\newblock In {\em Analysis and topology on singular spaces, {II}, {III}
  ({L}uminy, 1981)}, volume 101 of {\em Ast\'erisque}, pages 243--267. Soc.
  Math. France, Paris, 1983.

\bibitem{MR01}
J.~C. McConnell and J.~C. Robson.
\newblock {\em Noncommutative {N}oetherian rings}, volume~30 of {\em Graduate
  Studies in Mathematics}.
\newblock American Mathematical Society, Providence, RI, revised edition, 2001.
\newblock With the cooperation of L. W. Small.

\bibitem{MN91}
Z.~Mebkhout and L.~Narv{\'a}ez-Macarro.
\newblock La th\'eorie du polyn\^ome de {B}ernstein-{S}ato pour les alg\`ebres
  de {T}ate et de {D}work-{M}onsky-{W}ashnitzer.
\newblock {\em Ann. Sci. \'Ecole Norm. Sup. (4)}, 24(2):227--256, 1991.

\bibitem{Nakayama09}
Hiromasa Nakayama.
\newblock Algorithm computing the local {$b$} function by an approximate
  division algorithm in {$\hat{\scr D}$}.
\newblock {\em J. Symbolic Comput.}, 44(5):449--462, 2009.

\bibitem{Narvaez08}
L.~Narv{\'a}ez-Macarro.
\newblock Linearity conditions on the {J}acobian ideal and
  logarithmic-meromorphic comparison for free divisors.
\newblock In {\em Singularities {I}}, volume 474 of {\em Contemp. Math.}, pages
  245--269. Amer. Math. Soc., Providence, RI, 2008.

\bibitem{NN10}
K.~Nishiyama and M.~Noro.
\newblock Stratification associated with local b-functions.
\newblock {\em J. Symbolic Comput.}, 45(4):462--480, 2010.

\bibitem{Noro02}
Masayuki Noro.
\newblock An efficient modular algorithm for computing the global
  {$b$}-function.
\newblock In {\em Mathematical software ({B}eijing, 2002)}, pages 147--157.
  World Sci. Publ., River Edge, NJ, 2002.

\bibitem{Oaku97a}
Toshinori Oaku.
\newblock An algorithm of computing {$b$}-functions.
\newblock {\em Duke Math. J.}, 87(1):115--132, 1997.

\bibitem{Oaku97b}
Toshinori Oaku.
\newblock Algorithms for {$b$}-functions, restrictions, and algebraic local
  cohomology groups of {$D$}-modules.
\newblock {\em Adv. in Appl. Math.}, 19(1):61--105, 1997.

\bibitem{Oaku97c}
Toshinori Oaku.
\newblock Algorithms for the {$b$}-function and {$D$}-modules associated with a
  polynomial.
\newblock {\em J. Pure Appl. Algebra}, 117/118:495--518, 1997.
\newblock Algorithms for algebra (Eindhoven, 1996).

\bibitem{SST00}
M.~Saito, B.~Sturmfels, and N.~Takayama.
\newblock {\em Gr\"obner deformations of hypergeometric differential
  equations}, volume~6 of {\em Algorithms and Computation in Mathematics}.
\newblock Springer-Verlag, Berlin, 2000.

\bibitem{Saito93}
Morihiko Saito.
\newblock On {$b$}-function, spectrum and rational singularity.
\newblock {\em Math. Ann.}, 295(1):51--74, 1993.

\bibitem{Saito94}
Morihiko Saito.
\newblock On microlocal {$b$}-function.
\newblock {\em Bull. Soc. Math. France}, 122(2):163--184, 1994.

\bibitem{Gmssinglib}
M.~Schulze.
\newblock A \textsc{Singular} 3.0 library to compute invariants related to the
  {G}auss-{M}anin system of an isolated hypersurface singularity
  \texttt{gmssing.lib}.
\newblock 2004.

\bibitem{Schulze04}
Mathias Schulze.
\newblock A normal form algorithm for the {Brieskorn} lattice.
\newblock {\em J. Symbolic Comput.}, 38(4):1207--1225, 2004.

\bibitem{Steenbrink77}
J.~H.~M. Steenbrink.
\newblock Mixed {H}odge structure on the vanishing cohomology.
\newblock In {\em Real and complex singularities ({P}roc. {N}inth {N}ordic
  {S}ummer {S}chool/{NAVF} {S}ympos. {M}ath., {O}slo, 1976)}, pages 525--563.
  Sijthoff and Noordhoff, Alphen aan den Rijn, 1977.

\bibitem{Torrelli07}
Tristan Torrelli.
\newblock Logarithmic comparison theorem and {$\scr D$}-modules: an overview.
\newblock In {\em Singularity theory}, pages 995--1009. World Sci. Publ.,
  Hackensack, NJ, 2007.

\bibitem{Torrelli09}
Tristan Torrelli.
\newblock Intersection homology {$\scr D$}-module and {B}ernstein polynomials
  associated with a complete intersection.
\newblock {\em Publ. Res. Inst. Math. Sci.}, 45(2):645--660, 2009.

\bibitem{Varchenko81}
A.~N. Varchenko.
\newblock Asymptotic {H}odge structure on vanishing cohomology.
\newblock {\em Izv. Akad. Nauk SSSR Ser. Mat.}, 45(3):540--591, 1981.

\bibitem{Walther05}
Uli Walther.
\newblock Bernstein-{S}ato polynomial versus cohomology of the {M}ilnor fiber
  for generic hyperplane arrangements.
\newblock {\em Compos. Math.}, 141(1):121--145, 2005.

\end{thebibliography}
\bibliographystyle{plain}


%
%
%
%
\end{document}